\def\maketag@@@#1{\hbox{\m@th\normalfont\normalsize#1}}
\newtheorem*{thm-no-num}{Theorem}
\newtheorem*{df-no-num}{Definition}
\newtheorem{thm}{Theorem} [section]
\newtheorem{maintheorem}{\bf Theorem}
\newtheorem{prop}[thm]{Proposition} 
\newtheorem{lm}[thm]{Lemma} 
\newtheorem{cor}[thm]{Corollary} 
\theoremstyle{remark}
\newtheorem{rmk}[thm]{Remark}
\newtheorem{ex}[thm]{Example}
\newcommand{\bA}{\mathbb{A}}
\newcommand{\PP}{\mathbb{P}}
\newcommand{\ZZ}{\mathbb{Z}}
\newcommand{\FF}{\mathbb{F}}
\newcommand{\cl}[1]{\mathcal{#1}}
\newcommand*{\sheafhom}{\mathcal{H}\kern -.5pt om}
\newcommand{\Mcal}{\cl{M}}
\newcommand{\Hcal}{\cl{H}}
\newcommand{\Xcal}{\cl{X}}
\newcommand{\Ycal}{\cl{Y}}
\newcommand{\bfk}{\mathbf{k}}
\renewcommand{\H}{{\rm H}}
\newcommand{\K}{{\rm K}}
\newcommand{\M}{{\rm M}}
\newcommand{\Inv}{{\rm Inv}^{\bullet}}
\newcommand{\Spec}{{\rm Spec}}
\begin{document}
	\title[Theta characteristics and cohomology classes]{Cohomology classes on moduli of curves from theta characteristics}
	\author[A. Jaramillo Puentes]{Andrés Jaramillo Puentes}
	\address{Universit\"at T\"ubingen, Fachbereich Mathematik, Auf der Mongenstelle 10, 72076 T\"ubingen, Germany}
	\email{andres.jaramillo-puentes@math.uni-tuebingen.de}
	\author[R. Pirisi]{Roberto Pirisi}
	\address{Università degli Studi di Napoli Federico II, Dipartimento di matematica e applicazioni R. Caccioppoli, Via Cintia, Monte S. Angelo I-80126, Napoli, Italy}
	\email{roberto.pirisi@unina.it}	
	\date{\today}
\begin{abstract}
Using Galois-Stiefel-Whitney classes of theta characteristics we show that over a totally real base field the moduli stack of smooth genus $g$ curves and the moduli stack of principally polarized abelian varities of dimension $g$ have nontrivial cohomological invariants and \'etale cohomology classes in degree respectively $2^{g-2}$, $2^{g-1}$ and $2^{g-1}$. We also compute the pullback from the Brauer group of $\Mcal_3$ to that of $\Hcal_3$ over a general field of characteristic different from $2$. 
\end{abstract}
\maketitle

\section*{Introduction}
Recently A. Di Lorenzo and the second author proved \cite{DilPirM3} that given any base field $\bfk$ of characteristic $c \neq 2$ and an even positive integer $\ell$ not divisible by $c$ we have
\[ {\rm Br}(\Mcal_3)_\ell = {\rm Br}(\bfk)_\ell \oplus \ZZ/2\ZZ.\]
The result is obtained using the theory of cohomological invariants to bound the possible elements in the group and then leveraging theta characteristics and Galois-Stiefel-Whitney classes to construct an explicit generator. The paper leaves three questions open:

\begin{enumerate}
    \item What is the image of ${\rm Br}(\Mcal_3)$ inside ${\rm Br}(\Hcal_3)$?
    \item Is it possible to compute the full cohomological invariants of $\Mcal_3$, at least over an algebraically closed field?
    \item Do the \'etale algebras obtained by odd and even theta characteristics induce any nontrivial invariants on $\Mcal_g$ for $g \geq 4$?
\end{enumerate}

In this paper we answer the first and third question\footnote{The second question will be addressed in an upcoming work by A. Di Lorenzo.}, and as byproduct we exhibit some nontrivial \'etale cohomology classes for the moduli stacks of smooth curves and principally polarized abelian varieties. Our results are as follows:

\begin{maintheorem}\label{thm:pullback}
The pullback from the Brauer group of $\Mcal_3$ to the Brauer group of $\Hcal_3$ is injective and the image of the class $\alpha_2(\mathcal{S}_3^-)$ coming from odd theta characteristics is the class $\alpha_2(\mathcal{W}_3)+\lbrace-1\rbrace\alpha_1(\mathcal{W}_3)$ coming from the Weierstrass divisor.
\end{maintheorem}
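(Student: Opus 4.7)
The plan is to identify explicitly the restriction of the odd theta characteristic étale algebra $\mathcal{S}_3^-$ to $\Hcal_3$ as a functorial construction on the Weierstrass étale algebra $\mathcal{W}_3$, compute its Galois-Stiefel-Whitney classes in low degree, and combine this with the previously established structure of $\mathrm{Br}(\Mcal_3)$.

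The first step uses Mumford's classical description of theta characteristics on hyperelliptic curves: for a genus $3$ hyperelliptic curve $C\to\PP^1$ with Weierstrass points $W_1,\dots,W_8$, the $28$ odd theta characteristics are exactly the line bundles $\OO_C(W_i+W_j)$ for unordered pairs $\{i,j\}$ with $i\neq j$. Promoting this pointwise identification to a stack-theoretic isomorphism of étale algebras on $\Hcal_3$ yields
\[
\mathcal{S}_3^-|_{\Hcal_3}\;\cong\;\binom{\mathcal{W}_3}{2},
\]
the rank $28$ étale algebra of unordered pairs of Weierstrass points, and reduces the pullback computation to that of $\alpha_2\binom{\mathcal{W}_3}{2}$ in terms of the classes $\alpha_i(\mathcal{W}_3)$.

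For the characteristic-class computation I would use the virtual decomposition $\binom{A}{2}=\Sym^2 A\ominus A$ of the $S_n$-representations together with the multiplicativity of total Galois-Stiefel-Whitney classes and the standard low-degree formulas for $\Sym^2$ of a permutation representation. The expected outcome, specialised to $n=8$, is $\alpha_1\binom{\mathcal{W}_3}{2}=0$ and
\[
\alpha_2\binom{\mathcal{W}_3}{2}\;=\;\alpha_2(\mathcal{W}_3)+\{-1\}\alpha_1(\mathcal{W}_3),
\]
with the factor $\{-1\}$ arising from the sign twists intrinsic to the $S_2$-action used to form $\Sym^2$. Injectivity then follows from \cite{DilPirM3}, which gives $\mathrm{Br}(\Mcal_3)=\mathrm{Br}(\bfk)\oplus\ZZ/2\ZZ$ with the $\ZZ/2$ summand generated by $\alpha_2(\mathcal{S}_3^-)$, together with the facts that $\mathrm{Br}(\bfk)\to\mathrm{Br}(\Hcal_3)$ is injective (by the existence of a $\bfk$-rational hyperelliptic genus $3$ curve, which provides a section) and that $\alpha_2(\mathcal{W}_3)+\{-1\}\alpha_1(\mathcal{W}_3)$ is a nontrivial cohomological invariant of $\Hcal_3$ by the second author's earlier computation.

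The hardest step will be the characteristic-class computation. The multiplicative combinatorics under the permutation homomorphism $S_8\to S_{28}$ is delicate, and producing the exact class $\{-1\}\alpha_1(\mathcal{W}_3)$, rather than some other degree-$1$ invariant, requires careful tracking of the signs arising from the pair construction and the specific fact that $8$ is even.
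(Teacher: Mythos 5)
Your starting point --- that for a genus $3$ hyperelliptic curve the $28$ odd theta characteristics are exactly the $\OO_C(W_i+W_j)$, so that $\mathcal{S}_3^-|_{\Hcal_3}$ is canonically the étale algebra of unordered pairs in $\mathcal{W}_3$ --- is the same classical input (\cite{Dolg}, the $\theta_T$ with $|T|=2$) that the paper uses, but you package it functorially and then aim for a \emph{universal} identity in ${\rm Inv}^2(\textnormal{\'Et}_8,\K_2)$ rather than determining coefficients by evaluation. That reorganization has a real payoff: since $\alpha_2(\mathcal{S}_3^-)|_{\Hcal_3}$ is then pulled back from $\textnormal{\'Et}_8$ along the classifying map of $\mathcal{W}_3$, it automatically lies in the submodule spanned by $1,\alpha_1(\mathcal{W}_3),\alpha_2(\mathcal{W}_3)$, so the $w_2$-component vanishes for structural reasons and the paper's entire second test curve (the irrational conic) becomes unnecessary. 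The paper instead writes $i^*\alpha_2(\mathcal{S}_3^-)=r\,\alpha_1(\mathcal{W}_3)+s\,\alpha_2(\mathcal{W}_3)+t\,w_2$ using the presentation of ${\rm Br}(\Hcal_3)$ from \cite{DilPirBr} and pins down $r,s,t$ on two explicit hyperelliptic curves over $\bfk(a_1,a_2,a_3)$ and $\bfk(a_1,a_2)$. Your injectivity argument is the same as the paper's, and it is complete provided you also note (as you implicitly do via \cite{DilPirBr}) that $\alpha_2(\mathcal{W}_3)+\{-1\}\alpha_1(\mathcal{W}_3)$ is not a constant class.

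The genuine gap is the step you yourself flag as hardest: the identity $\alpha_2\bigl(\tbinom{A}{2}\bigr)=\alpha_2(A)+\{-1\}\alpha_1(A)$ for $A$ of rank $8$ is asserted, not proved, and the proposed derivation via the virtual decomposition $\tbinom{A}{2}=\Sym^2 A\ominus A$ together with ``standard low-degree formulas for $\Sym^2$'' does not go through as stated. Galois--Stiefel--Whitney classes of étale algebras do not obey a rank-one splitting principle the way topological Stiefel--Whitney classes of vector bundles do: a general rank $8$ algebra does not decompose into quadratic pieces, and there is no off-the-shelf formula for $\alpha_{\rm tot}$ of $\Sym^2$ of an étale algebra. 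The correct substitute is Serre's detection principle \cite{GMS}*{Ch. 7}: invariants of $\textnormal{\'Et}_8$ are determined by their restrictions to multiquadratic algebras $E_1\times\cdots\times E_m\times F^{8-2m}$ over all extensions of $\bfk$, where $\tbinom{A}{2}$ can be written out explicitly and $\alpha_{\rm tot}$ computed by multiplicativity. Carrying this out is essentially the paper's first test curve computation, where $W_C=E_1\times E_2\times E_3\times F^2$ and $S^-_C=F^4\times E_1^2\times E_2^2\times E_3^2\times E_{12}\times E_{13}\times E_{23}$; your formula does check against that example. So the two approaches converge at the same computational core, and until you supply the detection argument (or an equivalent versality argument) the central claim of your proof is unestablished.
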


\begin{maintheorem}\label{thm:classes}
Let $\bfk=\mathbb{R}$ be the field of real numbers, and $g\geq 3$. Then the following classes are $\H^{\bullet}(\mathbb{R},\ZZ/2\ZZ)$-linearly independent:
\begin{enumerate}
    \item $1, \alpha_{2^{g-2}}(\mathcal{S}_g^-) \in \H^{2^{g-2}}(\Mcal^{\rm ct}_g,\ZZ/2\ZZ), \alpha_{2^{g-1}}(\mathcal{S}_g) \in \H^{2^{g-1}}(\Mcal^{\rm ct}_g,\ZZ/2\ZZ)$,
    \item $1, \alpha_{2^{g-1}}((\mathcal{X}_g)_{2}) \in \H^{2^{g-1}}(\mathcal{A}_g,\ZZ/2\ZZ)$.
\end{enumerate}
Moreover, they are independent from the image of the cycle map, and they stay independent when pulled back to $\Mcal_g$, when pulled back to $\Hcal_g$ and when they are seen as cohomological invariants.
\end{maintheorem}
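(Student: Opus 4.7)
The plan is to prove linear independence by restricting the classes to explicit test objects defined over function-field extensions $K/\mathbb{R}$, where the resulting elements of $\H^{\bullet}(K,\ZZ/2\ZZ)$ involve cohomology classes that the subring generated by $\{-1\}$ alone cannot produce. Since pullback preserves linear independence in the direction ``if independent after pullback, then already independent before'', and since $\Hcal_g \hookrightarrow \Mcal_g \hookrightarrow \Mcal_g^{\rm ct}$ and the test objects are field-valued (which also captures the cohomological invariant variant), a single calculation on $\Hcal_g$ and on $\mathcal{A}_g$ will settle all stated forms of independence.

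For the abelian variety case, I would take $A = E_1 \times \cdots \times E_g$ over $K = \mathbb{R}(t_1, \ldots, t_g)$, with each $E_i$ a real elliptic curve whose 2-torsion algebra is $K \times K \times L_i$ for a non-trivial quadratic extension $L_i = K(\sqrt{-t_i})$, for instance $E_i \colon y^2 = x(x-1)(x+t_i)$. Then $A[2] = \bigotimes_i E_i[2]$ as étale $K$-algebras, so the total Galois-Stiefel-Whitney class is computed via a Whitney-type formula for tensor products: introducing formal roots $\{0,0,0,\{L_i\}\}$ for each factor, the SW classes of the tensor product become the elementary symmetric polynomials in all sums $\sum_i \alpha_i$ of one root from each factor. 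Extracting the degree-$2^{g-1}$ component and reducing powers via the Milnor $K$-theory identity $\{L_i\}^2 = \{L_i\}\{-1\}$ yields an expression $\sum_{T \subset \{1,\ldots,g\}} c_T \prod_{i\in T}\{L_i\}\cdot\{-1\}^{2^{g-1}-|T|}$ in $\H^{2^{g-1}}(K,\ZZ/2\ZZ)$. Verifying that at least one coefficient $c_T$ with $|T| \geq 1$ is odd then shows, by algebraic independence of $\{L_1\},\ldots,\{L_g\},\{-1\}$ in $\H^{\bullet}(K,\ZZ/2\ZZ)$, that the class is not of the form $\{-1\}^{2^{g-1}}$.

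For the curve case, I would pull back along $\Hcal_g \hookrightarrow \Mcal_g^{\rm ct}$ and use the classical combinatorial description: theta characteristics on a hyperelliptic curve correspond to subsets of the $2g+2$ Weierstrass points of a specified parity modulo complementation. This realizes $\mathcal{S}_g^\pm|_{\Hcal_g}$ as an explicit étale algebra built from $\mathcal{W}_g$, yielding polynomial formulas for $\alpha_{2^{g-2}}(\mathcal{S}_g^-)$ and $\alpha_{2^{g-1}}(\mathcal{S}_g)$ in terms of $\alpha_i(\mathcal{W}_g)$, extending the $g=3$ identity in \cref{thm:pullback}. Evaluating on the generic hyperelliptic curve $y^2 = \prod_i(x-t_i)$ over $K = \mathbb{R}(t_1, \ldots, t_{2g+2})$, the $\alpha_i(\mathcal{W}_g)$ become symmetric polynomials in the classes $\{t_i - t_j\}$, and independence from the $\{-1\}$-subring follows by an analogous combinatorial reduction. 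Independence from the image of the cycle map drops out of the same evaluation: Chern classes of bundles on the universal family specialize to combinations of algebro-geometric data with constrained structure that does not match the purely Galois-generated classes $\{L_i\}$ and $\{t_i - t_j\}$ carried by the GSW output.

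The main obstacle is the modulo-$2$ combinatorics of the Whitney-type formulas. For the tensor product of the $E_i[2]$, we are pulling back Stiefel-Whitney classes along $(S_4)^g \hookrightarrow S_{4^g}$ with the product action, and identifying which cross-terms survive modulo $2$ in degree $2^{g-1}$ requires careful bookkeeping with Lucas-type coefficients. For the hyperelliptic theta characteristic side, the subset-modulo-complementation construction adds a further combinatorial layer. In both cases, the key technical step is showing that some multilinear coefficient $c_T$ is odd---an argument parallel in spirit to the $g=3$ calculation underlying \cref{thm:pullback} but requiring the general-$g$ combinatorial generalization.
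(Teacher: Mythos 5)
Your overall strategy---evaluate the classes at explicit field-valued test points over suitable purely transcendental extensions of $\mathbb{R}$, observe that $\H^{\bullet}(\mathbb{R},\ZZ/2\ZZ)=\FF_2[\lbrace -1\rbrace]$ cannot produce the resulting symbols, and note that independence after pullback to $\Hcal_g$ implies all the other independence statements---is exactly the right one and is the route the paper takes. But your test curve for the main case is fatally miscalibrated. Over $K=\mathbb{R}(t_1,\dots,t_{2g+2})$ the curve $y^2=\prod_i(x-t_i)$ has all $2g+2$ Weierstrass points $K$-rational; by the very subset description of theta characteristics you invoke, every $\alpha_T$ and every $\theta_T$ is then an integral combination of rational points, so $S_C^{\pm}$ and $S_C$ are \emph{split} \'etale algebras and $\alpha_i(S_C^{\pm})=0$ for all $i>0$ (recall $\alpha_{\rm tot}(F^n/F)=1$). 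The classes $\lbrace t_i-t_j\rbrace$ never appear: the trace form of a split algebra is $\langle 1,\dots,1\rangle$. This test object detects nothing. What is needed is a Weierstrass divisor with Galois-conjugate pairs, e.g.\ the points $\pm\sqrt{a_j}$ over $F=\mathbb{R}(a_1,\dots,a_g)$ plus two rational points; then $S^{\pm}_C$ becomes a product of multiquadratic extensions $E_A=F(\sqrt{a_s}:s\in A)$ with multiplicities $2^{g-1-|A|}$ that must be counted, and the lowest nonvanishing class of $E_A$ sits in degree $2^{|A|-1}$, which is how the degrees $2^{g-2}$ and $2^{g-1}$ arise.

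Two further points. First, the step you defer as ``the main obstacle''---showing that the relevant mod-$2$ coefficient in degree $2^{g-1}$ (resp.\ $2^{g-2}$) is odd and that the resulting symbol is not a power of $\lbrace -1\rbrace$---is the entire technical content of the argument: in the paper it requires a full section analyzing $\prod_{I\subseteq\lbrace 1,\dots,n\rbrace}\bigl(1+\sum_{i\in I}a_i\bigr)$ recursively modulo the relations $a_i^2=\varepsilon a_i$, yielding $\alpha_{2^{n-1}}(E_n)=\varepsilon^{2^{n-1}-n}\lbrace a_1,\dots,a_n\rbrace+(\dots)$ and vanishing in all intermediate degrees. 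Without this, no independence has been established. Second, your $\mathcal{A}_g$ test object is a reasonable alternative in principle (and the tensor-product splitting formula for trace forms is valid), but the specific curve $y^2=x(x-1)(x+t_i)$ has fully rational $2$-torsion, so $E_i[2]\cong K^4$ rather than $K^2\times L_i$; you would need something like $y^2=x(x^2-t_i)$. A cleaner route, used in the paper, is to note that a curve with a rational theta characteristic satisfies ${\rm Jac}(C)_2\cong S_C$ as \'etale algebras, so the single hyperelliptic test curve also handles $\mathcal{A}_g$ via Torelli. Finally, independence from the cycle map should be argued via negligibility---positive-degree cycle classes restrict to zero at every field-valued point---rather than by an appeal to the ``constrained structure'' of Chern classes, which as stated proves nothing.
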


The classes $\alpha_i(\mathcal{W}_3), \alpha_i(\mathcal{S}_g^-), \alpha_i(\mathcal{S}_g)$ and $\alpha_{i}((\Xcal_g)_2)$ are obtained by pulling back elements in the cohomology of ${\rm BS}_n$ as explained in Section \ref{sec: StiefelW}. To show that these classes are nontrivial and to compare them, we proceed similarly to \cite{DilPirM3}*{Sec. 3.4}. We construct an explicit test curve over some transcendental extension of the base field, where the scheme of theta characteristics can be described explicitly, and we compute the induced Galois-Stieifel-Whitney classes. While in \emph{loc. cit.} A. Di Lorenzo and the second author construct a quartic as the ramification locus of a two-sheeted covering covering of $\PP^2$, here we construct a hyperelliptic curve over a field extension of $\bfk$ of transcendence degree $g$, and we leverage the explicit description \cite{Dolg} of theta characteristics on a hyperelliptic curve over an algebraically closed field to do our computations.
We begin with the simpler case of genus three, which allows us to prove Thm. \ref{thm:pullback} by comparing the class we obtain in degree $2$ with the generators identified in \cite{DilPirBr}, before moving on to the general case, which involves more substantial computations. Note that in proving Thm. \ref{thm:classes} we proceed, in a sense, in the opposite direction from how the theorem is stated. We show that these classes are independent as cohomological invariants of $\Hcal_g$, and this implies all other statements.

\subsection*{Notation}

Every scheme and stack will be assumed to be of finite type (or a localization thereof) over a field $\bfk$ of characteristic different from $2$.

With the letter $\ell$ we will always mean a positive integer coprime to the characteristic of $\bfk$. If $A$ is an abelian group, by $A_r$ we will always mean the $r$-torsion subgroup of $A$. If $A^{\bullet}$ is a graded group we write $A^{\bullet}\left[ s \right]$ for the graded group obtained by shifting $A^{\bullet}$ up in degree by $s$.

With the notation ${\rm H}^i(\Xcal,A)$ we always mean \'etale cohomology with coefficients in $A$, or lisse-\'etale if $\Xcal$ is not a Deligne-Mumford stack. If $R$ is a $\bfk$-algebra we will often write ${\rm H}^i(R,A)$ for ${\rm H}^i(\Spec(R),A)$. In particular for a field $F/\bfk$ we have ${\rm H}^i(F,A)={\rm H}^i_{\textnormal{\'et}}(\Spec(F),A)={\rm H}_{\textnormal{Gal}}^i(F,A)$.

\section{Preliminaries}

\subsection{Cycle modules}
Given a field $F$, the Milnor's $\K$-theory ring of $F$ is defined as the graded ring 
\[\K^{\bullet}_{\rm Mil}(F)=\oplus_i(F^*)^{\otimes i}/I\]
where $I$ is the bilateral ideal given by
\[I=\left( a_1 \otimes \ldots \otimes a_r \mid r>0, \, a_1 + \ldots + a_r = 1\right).\]
We write $\lbrace a_1, \ldots , a_r\rbrace$ for the element $a_1 \otimes \ldots \otimes a_r$. Some properties of $\K_{\rm Mil}^{\bullet}(F)$ we will need are:
\begin{itemize}
    \item $\K_{\rm Mil}^{\bullet}(F)$ is graded-commutative, i.e. \[\lbrace a_1, \ldots , a_r, b_1, \ldots, b_d \rbrace = (-1)^{rd} \lbrace b_1, \ldots, b_d, a_1, \ldots , a_r \rbrace.\]
    \item $\lbrace a_1, \ldots, a_n \rbrace$ is zero if $a_1 + \ldots + a_n = 0$.
    \item $\lbrace a, a \rbrace = \lbrace -1, a \rbrace.$
\end{itemize}
Milnor's K-theory induces a functor from the category of finitely generated field extensions of $\bfk$ to the category of graded abelian groups, and it has a series of important functorial properties:
\begin{enumerate}
    \item For any finitely generated field extension $f:F \to F'$, a restriction map 
    \[f^*:\K^{\bullet}_{\rm Mil}(F) \to \K^{\bullet}_{\rm Mil}(F'), \quad f^*\lbrace a \rbrace = \lbrace f(a) \rbrace.\]
    \item For any finite extension $f:F \to F'$, a norm map 
    \[f_*:\K^{\bullet}_{\rm Mil}(F') \to \K^{\bullet}_{\rm Mil}(F), \quad f_*\lbrace a \rbrace = \lbrace {\rm N}^{F'}_F(a) \rbrace.\]
    \item For a DVR $(R,v)$ with quotient field $\bfk(R)$ and residue field $\bfk_v$ a ramification map 
    \[\partial_v:\K^{\bullet}_{\rm Mil}(\bfk(R)) \to \K^{\bullet}_{\rm Mil}(\bfk_v), \quad \partial_v(\lbrace \pi, a_1, \ldots, a_r \rbrace)=\lbrace \overline{a_1}, \ldots, \overline{a_r} \rbrace, \]
    where $\pi$ is a uniformizer for $R$, which lowers the degree by one.
\end{enumerate}
The maps above satisfy the expected functorial compatibilities and a projection formula $f_*(f^*(\lbrace a_1, \ldots a_r\rbrace))={\rm deg}(F'/F)\lbrace a_1, \ldots a_r\rbrace$. Moreover we have the following exact sequence for Milnor's K-theory of a purely transcendental extension
\[0 \to \K_{\rm Mil}^{\bullet}(F) \to \K_{\rm Mil}^{\bullet}(F(t)) \xrightarrow{\partial} \bigoplus_{x \in \bA^1_F} \K_{\rm Mil}^{\bullet}(\bfk(x)) \to 0, \]
where the map $\partial$ is given by the sum of all ramification maps coming from points of codimension one in $\bA^1_F$.

As defined by Rost \cite{Rost}, a cycle module $\M$ is, roughly, a functor from finitely generated extensions of our base field $\bfk$ to abelian groups such that for all $F$, the group $\M(F)$ is a $\K_{\rm Mil}(F)$-module. It has the same main properties of Milnor's $K$-theory, and these are compatible with the module structure, for example there are transfer maps for finite extensions and ramification maps for geometric valuations compatible with the module structure.  
The main examples of cycle modules are $\K_{\rm Mil}$ itself, its quotients $\K_{\rm Mil}/\ell \stackrel{\rm def}{=} \K_\ell$, and twisted Galois cohomology with coefficients in a torsion Galois module $\H_D \stackrel{\rm def}{=} \oplus_i \H_{\rm Gal}^{i}(-,D(i))$. By the Norm-Residue theorem, we have the equality $\K_\ell = \H_{\ZZ/\ell\ZZ}$.

\subsection{Cohomological invariants}\label{sec:cohinv}

We briefly recall the definition of cohomological invariants and list some important properties. For an in depth discussion the reader can refer to \cite{DilPirBr}*{Sec. 2}. Fix a cycle module $\M$. Given an algebraic stack $\Xcal/\bfk$, we consider \cite{DilPirBr}*{Definition 2.3} the functor 
\[{\rm Pt}_{\Xcal}: (\mathcal{F}/\bfk) \to ({\rm Set})\]
assigning to a finitely generated extension $F/\bfk$ the set $\Xcal(F)/\!\sim$ of $F$-valued points of $\Xcal$ modulo isomorphism. A cohomological invariant of $\Xcal$ is a natural transformation
\[\alpha: {\rm Pt}_{\Xcal} \to \M\]
satisfying a \emph{continuity condition} encoding that the natural transformation is well-behaved in regards to specializations. In practice, one should think of a cohomological invariant as a functorial way to assign to a map $f:\Spec(F) \to \Xcal$, an element $\alpha(f) \in \M^{\bullet}(F)$.

The cohomological invariants of $\Xcal$ with coefficients in $\M$ form a graded group $\Inv(\Xcal,\M)$. When $\M=\K_{\rm Mil}$ or $\M=\K_\ell$, the product structure makes $\Inv(\Xcal,\M)$ into a graded ring, and in general $\Inv(\Xcal, \M)$ is an $\Inv(\Xcal, \K_{\rm Mil})$-graded module. Moreover, if~$\M$ is of $\ell$-torsion, then $\Inv(\Xcal, \M)$ is an $\Inv(\Xcal,\K_\ell)$-graded module.

We have a map $\M^{\bullet}(\bfk) \to \Inv(\Xcal,\M)$ which sends an element $x \in \M^{\bullet}(\bfk)$ to the cohomological invariant which assigns to any $f: \Spec(F) \to \Xcal$ the pullback of $x$ to $\M^{\bullet}(F)$. We call the image of this map the subgroup of \emph{constant invariants}. The map is an inclusion whenever $\Xcal(\bfk)$ is not empty.

Below we list some properties of cohomological invariants that we will need:

\begin{enumerate}
    \item A morphism of algebraic stacks $f:\Ycal \to \Xcal$ induces a pullback map $f^*$ on cohomological invariants defined by $f^*(\alpha)(P) = \alpha(f(P))$.
    \item Cohomological invariants form a sheaf in the \emph{smooth-Nisnevich} topology, where coverings are smooth representable maps $\Xcal \to \Ycal$ such that any $F$-valued point of $\Ycal$ lifts to a $F$-valued point of $\Xcal$.
    \item In particular, if $\M=\H_D$ and $\Xcal$ is a smooth stack, then the cohomological invariants with coefficients in $\H_D$ are the sheafification on the smooth-Nisnevich site of $\Xcal$ of twisted cohomology with coefficients in $D$.
    \item If $\Xcal$ is smooth over $\bfk$, a dominant open immersion $\mathcal{U} \to \Xcal$ induces an injective map on cohomological invariants.
    \item If $\Xcal$ is smooth over $\bfk$ and $f:\Ycal \to \Xcal$ is either an affine bundle, the projectivization of a vector bundle or an open immersion whose complement has codimension $\geq 2$ then $f^*:\Inv(\Xcal, \M) \to \Inv(\Ycal, \M)$ is an isomorphism.
    \item If $\Xcal=\left[ X/G\right]$ is the quotient of a smooth scheme by a smooth algebraic group over $\bfk$, then 
    \[
{\rm Inv}^1(\Xcal,\K_\ell)={\rm H}^1_{\textnormal{lis-\'et}}(\Xcal,\mu_\ell) \textnormal{ and } {\rm Inv}^2(\Xcal, {\rm H}_{\mu_\ell^{\vee}})={\rm Br}'(\Xcal)_{\ell},
\]
    where the right hand side is respectively lisse-étale cohomology and the cohomological Brauer group. 
\end{enumerate}

\subsection{Cycle map and negligible cohomology}
Assume the field $\bfk$ contains a primitive $\ell$-th root of $1$, and let $X$ be smooth over $\bfk$. Then, there is a \emph{cycle map} \cite{Mil}*{Ch. VI, Sec. 9}
\[{\rm cl}:{\rm CH}^{\bullet}(X)/\ell \to \H^{\bullet}(X,\ZZ/\ell\ZZ)\]
sending the class of a smooth irreducible subvariety $Z \xrightarrow{i} X$ of codimension $d$ to the element
\[{\rm cl}(Z) = i_*(1_Z) \in \H^{2d}(X,\ZZ/\ell\ZZ).\]
Given a quotient stack $\left[X/G\right]$ where both $X$ and $G$ are smooth over $\bfk$ we can define a cycle map
\[{\rm cl}:{\rm CH}_G^{\bullet}(X)/\ell \to \H^{\bullet}(\left[X/G\right],\ZZ/\ell\ZZ)\]
by means of equivariant approximation \cites{EG, Tot}. Pick a representation $V$ of~$G$ such that the action is free on a subset $U \subset V$ of codimension higher than $d$, so that $\left[(U\times X)/G\right]=(U\times X)/G$ is an algebraic space. By definition, an equivariant cycle $z \in {\rm CH}^d_G(X)$ is represented by a cycle
\[Z \in {\rm CH}^d((X\times U)/G)\stackrel{\rm def}{=}{\rm CH}^d_G(X).\]
Then, we define
\[{\rm cl}(z) = {\rm cl}(Z) \in {\rm H}^{2d}((X\times U)/G,\ZZ/\ell\ZZ)\simeq {\rm H}^{2d}(\left[X/G\right],\ZZ/\ell\ZZ).\]
The definition is well posed thanks to the same double fibration argument that shows that equivariant Chow groups are well defined, and when $\left[ X/G\right]$ is a scheme, it coincides with the classical cycle map.

Following \cite{GMS}*{Sec. 26}, we say that an element $\alpha \in \H^{\bullet}(\Xcal,D)$ is \emph{negligible} if for every field $F$ and map $p:\Spec(F) \to \Xcal$ we have that $p^*(\alpha)=0$. In other words, an element $\alpha$ is negligible if it lies in the kernel of the Smooth-Nisnevich sheafification map
\[\Gamma_{\rm sm-Nis}:{\rm H}^{\bullet}(\Xcal,D) \to \Inv(\Xcal, \H_D).\]
Now let $\Xcal$ be a smooth equidimensional quotient $\left[X/G\right]$ and consider an element $z \in {\rm CH}^d_G(X)$, with $d > 0$. We want to show that ${\rm cl}(z)$ is negligible. Let $U$ be as above and let $\pi$ be the projection $(X\times U)/G \to \left[ X/G \right]$. Then by properties 4, 5 in Sec. \ref{sec:cohinv}, the pullback 
\[\pi^*:\Inv(\left[X/G\right],\H_D) \to \Inv((X\times U)/G,\H_D)\]
is injective. Moreover, by definition $\pi^*({\rm cl}(z))={\rm cl}(Z)$. As $Z$ is supported on a closed subset of positive codimension, there is an open subset $Y \xrightarrow{j} (X\times U)/G$ such that $j^*{\rm cl}(Z)=0$, which shows that 
\[0=\Gamma_{\rm sm-Nis}( j^* \pi^* {\rm cl}(z))=j^*\pi^*\Gamma_{\rm sm-Nis}({\rm cl}(z)).\]
On the other hand, both $\pi^*$ and $j^*$ are injective on cohomological invariants, which implies that $\Gamma_{\rm sm-Nis}({\rm cl}(z))=0$. We have shown:
\begin{prop}\label{prop: negligible}
The image of ${\rm CH}_G^d(X)$ through the cycle map is negligible for $d > 0$. In particular, a cohomology class which produces a noncostant cohomological invariant cannot belong to the $\H^{\bullet}(\bfk,\ZZ/\ell\ZZ)$-submodule generated by the image of the cycle map.
\end{prop}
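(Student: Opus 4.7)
My plan would closely follow the sheaf-theoretic machinery laid out in Section \ref{sec:cohinv}. The key idea is to pass from the stacky cycle class to an honest cycle class on an equivariant approximation, where the usual support argument applies, and then pull back injectivity results for cohomological invariants along the approximation map.

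Concretely, I would first pick a representation $V$ of $G$ with an open $U\subset V$ on which $G$ acts freely and whose complement has codimension strictly greater than $d$; then the quotient $Y\stackrel{\rm def}{=}(X\times U)/G$ is a smooth algebraic space, and $z\in \CHgl^d(X)$ is represented by an honest cycle $Z\in \CH^d(Y)$ satisfying $\pi^*({\rm cl}(z))={\rm cl}(Z)$, where $\pi:Y\to [X/G]$ is the natural projection. By property 5 of Section \ref{sec:cohinv}, applied to $\pi$ as the composition of a vector bundle stack projection and an open immersion of codimension at least $d+1\geq 2$, the pullback $\pi^*:\Inv([X/G],\H_D)\to \Inv(Y,\H_D)$ is an isomorphism, so in particular injective. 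It therefore suffices to show $\Gamma_{\rm sm\textnormal{-}Nis}({\rm cl}(Z))=0$ in $\Inv(Y,\H_D)$.

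This last step is a standard support/excision argument: since $Z$ is supported on a closed subset $|Z|\subset Y$ of codimension $d>0$, the open complement $j:Y\setminus |Z|\hookrightarrow Y$ satisfies $j^*{\rm cl}(Z)=0$ by functoriality of proper pushforward in étale cohomology, hence $j^*\Gamma_{\rm sm\textnormal{-}Nis}({\rm cl}(Z))=0$. Property 4 of Section \ref{sec:cohinv} ensures that $j^*$ is injective on cohomological invariants (as $Y$ is smooth and $j$ is a dominant open immersion), so $\Gamma_{\rm sm\textnormal{-}Nis}({\rm cl}(Z))=0$ in $\Inv(Y,\H_D)$, and combining with the injectivity of $\pi^*$ we conclude $\Gamma_{\rm sm\textnormal{-}Nis}({\rm cl}(z))=0$ as desired.

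For the second assertion, I would observe that the negligible subgroup is stable under multiplication by constants: if $\alpha\in \H^{\bullet}(\Xcal,D)$ is negligible and $c\in \H^{\bullet}(\bfk,\ZZ/\ell\ZZ)$, then for every $p:\Spec(F)\to \Xcal$ one has $p^*(c\cdot\alpha)=c|_F\cdot p^*(\alpha)=0$. Hence every element of the $\H^{\bullet}(\bfk,\ZZ/\ell\ZZ)$-submodule generated by the image of the cycle map is still negligible, and its associated invariant is the zero invariant, which is in particular constant. The bulk of the work is the first part; of that, the least formal step and thus the main conceptual point to get right is the double invocation of injectivity (for $\pi^*$ and for $j^*$) at the level of cohomological invariants rather than of étale cohomology itself, which is what allows the argument to succeed on the stack $[X/G]$ despite the fact that the analogous statements fail for plain étale cohomology.
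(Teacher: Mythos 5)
Your argument is correct and follows essentially the same route as the paper: pass to the equivariant approximation $(X\times U)/G$, use the injectivity of $\pi^*$ and of the restriction $j^*$ to the complement of the support of $Z$ on cohomological invariants, and conclude that the sheafification of ${\rm cl}(z)$ vanishes. Your explicit treatment of the second assertion via $\H^{\bullet}(\bfk,\ZZ/\ell\ZZ)$-linearity of the negligible subgroup is a welcome addition, as the paper leaves that step implicit.
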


\subsection{Stiefel-Whitney classes}\label{sec: StiefelW}
Given a finite \'etale morphism $\mathcal{S\to \Xcal}$ of constant degree, we will show how to produce some (possibly trivial) classes in $\Inv(\Xcal, \K_2)$. 

An \emph{\'etale algebra} $E$ over $F$ is a finite $F$-algebra that is isomorphic to $(F')^n$, for some $n$, after passing to some finite extension $F'/F$, i.e. $\Spec(E) \to \Spec(F)$ is a finite \'etale morphism. Passing to the group scheme $T_E(R) = {\rm Aut}_R(E \otimes R)$ we obtain a ${\rm S}_n$-torsor over $F$. The category of finite \'etale algebras of rank~$n$ over~$\bfk$ forms a Deligne-Mumford stack $\textnormal{\'Et}_n$, which is isomorphic to ${\rm BS}_n$. Any finite \'etale morphism $\mathcal{S} \to \Xcal$ of constant degree $n$ induces a morphism $\Xcal \to \textnormal{\'Et}_n$, meaning we can pull the cohomological invariants of $\textnormal{\'Et}_n$ back to $\Xcal$.

The cohomological invariants of $\textnormal{\'Et}_n$ are fully understood \cite{GMS}*{Ch. 7}, and can be described as follows:

Given the ${\rm S}_n$-torsor $\Spec(\bfk) \to {\rm BS}_n$, the Hochschild-Serre Spectral sequence \cite{Mil}*{Ch. III, Thm. 2.20} 
\[{\rm E}_2^{p,q} = \H^p_{\rm grp}({\rm S}_n, \H^q(\bfk,\ZZ/2\ZZ)) \Rightarrow \H^{p+q}({\rm BS}_n,\ZZ/2\ZZ) \]
induces, via the edge map, a morphism 
\[\H^p_{\rm grp}({\rm S}_n,\H^0(\bfk,\ZZ/2\ZZ)) \to \H^{p}({\rm BS}_n,\ZZ/2\ZZ).\]
As $\H^0(\bfk,\ZZ/2\ZZ)=\ZZ/2\ZZ$ this induces a map from the group cohomology ring $\H^{\bullet}_{\rm grp}({\rm S}_n,\ZZ/2\ZZ)$ to $\H^{\bullet}({\rm BS}_n,\ZZ/2\ZZ)$, which becomes an isomorphism over an algebraically closed field.

The image of this map contains elements $\alpha_i, i \in \mathbb{N}$, called the Galois-Stiefel-Whitney classes, whose images through the sheafification map 
\[\H^{\bullet}({\rm BS}_n,\ZZ/2\ZZ) \to \Inv({\rm BS}_n, \H_{\ZZ/2\ZZ})=\Inv({\rm BS}_n, \K_2)\]
generate the cohomological invariants of ${\rm BS}_n$ and $\textnormal{\'Et}_n$ for any choice of cycle module~$\M$ and satisfy the following properties:
\begin{enumerate}
    \item $\Inv(\textnormal{\'Et}_n,\M)=\M^{\bullet}(\bfk) \oplus \alpha_1 \cdot \M^{\bullet}(\bfk) \oplus \ldots \oplus \alpha_{\lfloor n/2\rfloor} \cdot \M^{\bullet}(\bfk),$
    \item $\alpha_0=1$ and $\alpha_i(E)=0$ if $i$ is greater than $\lfloor n/2 \rfloor$.
\end{enumerate}
Moreover, define  the total Galois-Stiefel-Whitney class of $E$ as $\alpha_{\rm tot}(E)=\sum_i \alpha_i(E)$. Then:
\begin{enumerate}
    \item[(3)] $\alpha_{\rm tot}(F^n/F)=1$,
    \item[(4)] $\alpha_{\rm tot}(F(\sqrt{a})/F)=1+\lbrace a \rbrace$,
    \item[(5)] $\alpha_{\rm tot}(E \times E')=\alpha_{\rm tot}(E)\cdot \alpha_{\rm tot}(E').$
\end{enumerate}

Given a finite, degree $n$ \'etale morphism $\mathcal{S} \to \Xcal$, we will write $\alpha_i(\mathcal{S}) \in {\rm Inv}^i(\Xcal,\K_2)$ for the pullback of $\alpha_i \in {\rm Inv}^i(\textnormal{\'Et}_n, \K_2)$ through the morphism induced by $\mathcal{S}$. 

While the properties above are powerful, it is not obvious how to explicitly compute the value of these classes at an \'etale algebra $E/F$. To do so, we will define a different set of generators. Consider the classifying stack ${\rm BO}_n$; it is equivalent to the stack whose objects are $n$-dimensional vector bundles together with a nondegenerate quadratic form. We define cohomological invariants
\[\alpha_i^{\rm SW} \in {\rm Inv}^i({\rm BO}_n,\K_2)\]
and then use them to produce invariants of $\textnormal{\'Et}_n$. 

Given an element $(V_F, q)$ of ${\rm BO}_n(F)$, we can diagonalize the form $q$, yielding an $n$-uple of coefficients $(d_1, \ldots, d_n)$ in~$F^n$. Obviously, picking two different diagonal representations yields different coefficients: it is immediate that an invariant of $q$ must be left unchanged when multiplying coefficients by a square, or when permuting them. We define the $i$-th Stiefel-Whitney class of $q$ as
\[
\alpha^{\rm SW}_i(q) = \sigma_i(d_1,\ldots, d_n) \in \K^i_{2}(F),
\]
where $\sigma_0=1 \in \K^0_2(F)$ and $\sigma_i$ is the $i$-th elementary symmetric polynomial in~$d_1, \ldots, d_n$ seen as an element in $\K_2^i(F)=\H^i(F,\ZZ/2\ZZ(i))$, e.g. 
\[\sigma_2(a,b,c)=\lbrace a , b \rbrace + \lbrace a , c \rbrace + \lbrace b , c \rbrace \in \K^2_2(\bfk(a,b,c)).\]

For any cycle module module $\M$ we have
\[
\Inv({BO}_n,\M)=\M^{\bullet}(\bfk) \oplus \alpha^{\rm SW}_1 \cdot \M^{\bullet}(\bfk)_2 \oplus \ldots \oplus \alpha^{\rm SW}_{n} \cdot \M^{\bullet}(\bfk)_2.
\]

Given $E/F \in \textnormal{\'Et}_n(F)$, we can define a nondegenerate scalar product on $E$, seen as a $F$-vector space, by $\langle x, y \rangle_{\rm tr} = {\rm tr}(\cdot xy)$, that is the couple $x,y$ is sent to the trace of the multiplication by $xy$ seen as an endomorphism of the $F$-vector space~$E$. Write ${\rm tr}_{E/F}$ for the corresponding quadratic form. Then, we define 
\[\alpha_i^{\rm SW}(E) \stackrel{\rm def}{=} \alpha_i^{\rm SW}({\rm tr}_{E/F}).\] 
We have
\[
\Inv(\textnormal{\'Et}_n,\M)=\M^{\bullet}(\bfk) \oplus \alpha^{\rm SW}_1 \cdot \M^{\bullet}(\bfk)_2 \oplus \ldots \oplus \alpha^{\rm SW}_{\lfloor n/2\rfloor} \cdot \M^{\bullet}(\bfk)_2. 
\]
Note that the map $\Inv({BO}_n,\M) \to \Inv(\textnormal{\'Et}_n,\M)$ is not injective as the image of~$\alpha^{\rm SW}_i$ is zero for $i > \lfloor n/2 \rfloor + 1$ and the image of $\alpha^{\rm SW}_{\lfloor n/2 \rfloor + 1}$ is a combination of lower degree classes.

Galois-Stiefel-Whitney classes and Stiefel-Whitney classes are tied by the following simple relation \cite{GMS}*{Thm.25.10}:
\[
\alpha_i=
\begin{cases}
\alpha_i^{\rm SW} \quad & i \textnormal{ odd},\\
\alpha_i^{\rm SW} + \lbrace 2 \rbrace \cdot \alpha^{\rm SW}_{i-1}\quad & i\textnormal{ even}.
\end{cases}
\]
\begin{ex}
 If we write $E_2=\bfk(\sqrt{a_1},\sqrt{a_2})/\bfk(a_1,a_2)$, we know by \cite{DilPirM3}*{Ex. 3.12} that  
\[\alpha_{\rm tot}(E_2)=1 + \lbrace a_1, a_2 \rbrace + \lbrace -1, a_1a_2\rbrace.\]
As an additional example we will compute the total Stiefel-Whitney class of
\[E_3 = \bfk(\sqrt{a_1},\sqrt{a_2}, \sqrt{a_3})/\bfk(a_1,a_2, a_3).\]
 Consider the basis $1,\sqrt{a_1}, \sqrt{a_2}, \ldots, \sqrt{a_2a_3}, \sqrt{a_1a_2a_3}$ of $E_3$ as a vector space over $\bfk(a_1,a_2,a_3)$. In this basis the trace form is described by the diagonal matrix 
\[{\rm diag}(8, 8a_1, \ldots, 8a_2a_3, 8a_1a_2a_3)\sim {\rm diag}(2, 2a_1, \ldots, 2a_2a_3, 2a_1a_2a_3).\] 
This shows that 
\[ \alpha^{\rm SW}_i(E_3)=\sigma_i(2, 2a_1, \ldots, 2a_2a_3, 2a_1a_2a_3)\]
So we get
\begin{align*}
\alpha_1^{\rm SW}(E_3) &= \lbrace 2 \rbrace + \lbrace 2a_1 \rbrace + \ldots + \lbrace 2a_1a_2a_3 \rbrace = 8\lbrace 2 \rbrace + 4(\lbrace a_1 \rbrace + \lbrace a_2 \rbrace + \lbrace a_3 \rbrace)=0,\\
\alpha_2^{\rm SW}(E_3) &= \lbrace 2, 2a_1 \rbrace + \ldots +\lbrace 2a_2a_3, 2a_1a_2a_3 \rbrace = \ldots = 0, \\
\alpha_3^{\rm SW}(E_3) &= \lbrace 2, 2a_1, 2a_2 \rbrace + \ldots +\lbrace 2a_1a_3, 2a_2a_3, 2a_1a_2a_3 \rbrace = \ldots = 0, \\
\alpha_4^{\rm SW}(E_3) &= \lbrace 2, 2a_1, 2a_2, 2a_3 \rbrace + \ldots +\lbrace 2a_1a_3, 2a_2a_3, 2a_1a_2a_3, 2a_1a_2a_3 \rbrace\\
&\mkern-60mu =5\lbrace 2 \rbrace \cdot \sigma_3(a_1, \ldots, a_1a_2a_3) + \sigma_4(a_1, \ldots, a_1a_2a_3)\\
&\mkern-60mu =\lbrace -1, a_1, a_2, a_3 \rbrace + \lbrace -1 \rbrace^2 \cdot(\lbrace a_1, a_2 \rbrace + \lbrace a_1, a_3 \rbrace + \lbrace a_2, a_3 \rbrace) + \lbrace -1 \rbrace^3\cdot \lbrace a_1a_2a_3\rbrace.
\end{align*}
These are all the classes we need since $\alpha_i(E)=0$ for $i > \lfloor {\rm deg}(E)/2 \rfloor$. 

Applying the relation between the Stiefel-Whitney and Galois-Stiefel-Whitney classes we immediately get that that the total Galois-Stiefel-Whitney class $\alpha_{\rm tot}(E_3)$ is equal to 
\[1 + \lbrace -1, a_1, a_2, a_3 \rbrace + \lbrace -1 \rbrace^2 \cdot(\lbrace a_1, a_2 \rbrace + \lbrace a_1, a_3 \rbrace + \lbrace a_2, a_3 \rbrace) + \lbrace -1 \rbrace^3\cdot \lbrace a_1a_2a_3\rbrace.\]
Section \ref{sec: StiefEn} will be dedicated to computing these classes in general.
\end{ex}

\subsection{Curves of compact type}

Recall that $\Mcal_g$, the moduli stack of smooth genus $g$ curves, is the algebraic stack whose objects are proper smooth morphisms $C \to S$ such that each geometric fiber is a smooth curve of genus $g$. Its standard compactification is the stack $\overline{\Mcal_g}$, the moduli stack of stable curves of genus $g$, whose objects are proper flat morphisms $X \to S$ such that every geometric fiber is a stable curve of genus $g$, that is a proper one-dimensional scheme with arithmetic genus $g$, at worst nodal singularities, and finite automorphism group.

We have a partial compactification in between the two: the stack  of genus $g$ \emph{compact type} curves \cite{LandProper}
\[\Mcal_g \subset \Mcal_g^{\rm ct} \subset \overline{\Mcal_g}\]
whose objects are proper flat morphisms $C \to S$ such that each geometric fiber is a stable curve and its dual graph is a tree. Equivalently, a curve $C$ over a field $F$ is of compact type if its Picard schemes ${\rm Pic}^d(C)$ are proper for all $d$.

Now let $\mathcal{A}_g$ be the moduli stack of principally polarized abelian varieties (p.p.a.v.) of dimension $g$. The torelli morphism
\[\Mcal_g \to \mathcal{A}_g, \quad (C \to S) \mapsto ({\rm Jac}(C), \theta(C))\]
sending a smooth curve of genus $g$ to its Jacobian, has a natural extension to $\Mcal_g^{\rm ct}$. If $C$ is a compact type curve over a field $F$ and $C_1 \sqcup \ldots \sqcup C_r$ is its normalization then
\[{\rm Jac}(C)={\rm Jac}(C_1) \times \ldots \times {\rm Jac}(C_r), \quad \theta(C) = \theta(C_1) + \ldots + \theta(C_r).\]

\subsection{Theta characteristics}

Let $C$ be a curve over a field $F$. A \emph{theta characteristic} \cite{Mum71} of $C$ is an element $\sigma \in {\rm Pic}^{g-1}(C)$ such that $2\sigma = \omega_C \in {\rm Pic}^{2g-2}(C)$. Given two theta characteristics $\sigma, \sigma'$, clearly $\sigma-\sigma'$ is a $2$-torsion element. Conversely, for any $2$-torsion element $\alpha \in {\rm Jac}(C)_2$, the element $\sigma + \alpha$ is a theta characteristic. This shows that the subset of theta characteristics $S_C \subset {\rm Pic}^{g-1}(C)$ is a zero-dimensional subscheme and a ${\rm Jac}(C)_2$-torsor. In particular, its degree is $2^{2g}$.

A theta characteristic $\sigma$ is \emph{even} if ${\rm dim}(\H^0(C,\sigma))$ is even, and odd otherwise. The subschemes $S^{+}_C$ and $S^{-}_C$ of even and odd theta characteristics have degrees $2^g(2^{g-1}+1)$ and $2^g(2^{g-1}-1)$, respectively. Now, consider the universal relative Picard scheme
\[{\rm Pic}^{g-1}(\mathcal{C}_g) \to \Mcal^{\rm ct}_g.\]
Taking respectively the substack of even, odd and all theta characteristics, we get the finite \'etale morphisms
\[\mathcal{S}^+_g \to \Mcal_g^{\rm ct}, \quad \mathcal{S}^-_g \to \Mcal_g^{\rm ct}, \quad \mathcal{S}_g \to \Mcal_g^{\rm ct},\]
of degrees $2^g(2^{g-1}+1), 2^{g}(2^{g-1}-1)$ and $2^{2g}$ respectively. Given a curve $C$, we will write $S^+_C, S^-_C$ and $S_C$ for the \'etale algebras induced by its respective theta characteristics. We will also consider the \'etale covering of degree $2^{2g}$
\[(\Xcal_{g})_2 \to \mathcal{A}_g\]
given by the $2$-torsion in the universal abelian variety. Note that pulling back $(\Xcal_{g})_2$ to $\Mcal_g$ we obtain $(\mathcal{J}_g)_2$, the $2$-torsion in the universal Jacobian. When a curve $C/F$ has a theta characteristic defined over $F$, the ${\rm Jac}(C)_2$-torsor $S_C$ is trivial, which shows that ${\rm Jac}(C)_2=\Spec(S_C)$.

\subsection{Hyperelliptic curves}

A hyperelliptic curve of genus $g$ over a field $F$ is a couple $(C,\iota)$ where $C$ is a smooth genus $g$ curve over $F$ and $\iota$ is an involution of $C$ such that the quotient $Q=C/\iota$ has genus zero. The two-sheeted covering $C\to Q$ ramifies on the Weierstrass divisor $W_C \subset C$ which is \'etale of degree $2g+2$ over $F$.

A family of hyperelliptic curves is a proper, smooth morphism of relative dimension one such that every geometric fiber is an hyperelliptic curve. The stack $\Hcal_g$ whose objects are families of hyperelliptic curves is smooth over $\bfk$ of dimension $2g-1$, and the forgetful map $\Hcal_g \to \Mcal_g$ is a closed immersion. The Weierstrass divisor induces an \'etale cover
\[\mathcal{W}_g \to \Hcal_g\]
of degree $2g+2$, where the objects of $\mathcal{W}_g$ are families of hyperelliptic curves plus a section of the Weierstrass divisor. There is a second presentation of the stack of hyperelliptic curves by Arsie and Vistoli \cite{ArsVis}*{Rmk. 3.3, Ex. 3.5} that we will need. Consider the fibered category $\Hcal'_{\rm sm}(1,2,g+1)$ defined by
\[\Hcal'_{\rm sm}(1,2,g+1)(S)=\lbrace P \to S, L , s \rbrace,\]
where $P\to S$ is a Brauer-Severi scheme of relative dimension one, $L$ is an invertible sheaf on $P$ whose restriction to each geometric fiber has degree $-g-1$ and $s$ is a section of $L^{\otimes -2}$ whose zero locus is \'etale on $S$. Then the morphism
\[
\Hcal_g \to \Hcal'_{\rm sm}(1,2,g+1)
\]
obtained as in \cite{DilPirRS}*{Prop. 2.1}, is an isomorphism. Note that the curve $(C,\iota)$ maps to the triple $(C/\iota, \pi_*\mathcal{O}_C, s)$ where the vanishing locus of $s$ is exactly the image of $W_C$, and $\pi\colon C\xrightarrow{}C/\iota$ is the quotient map.

\subsection{Theta characteristics of hyperelliptic curves}\label{sec: hypertheta}

Consider a hyperelliptic curve $C$ over an algebraically closed field $F$ of characteristic different from $2$. Following \cite{Dolg}*{Ch. 5} we get the following description of the 2-torsion in its Jacobian. Let $c_1, \ldots, c_{2g+2}$ be the $2g+2$ points in the Weierstrass divisor, and $p_1, \ldots, p_{2g+2}$ their images. 
We have $(2g-2)c_i = \omega_C$, so $(g-1)c_i$ is a theta characteristic for any~$i$. Moreover, $2c_i - 2c_j \sim 0$ as it is the pullback of $p_i - p_j \in {\rm Pic}(\PP^1_F)$. 

Set $I=\lbrace 1,\ldots,  2g+2 \rbrace$, and let $T$ be any subset of $I$. We define
\[
\alpha_T = \sum_{i \in T}( c_i - c_{2g+2}) \in {\rm Pic}^0(C)\left[2\right].
\]
Note that $\alpha_I \sim 0$ as it is the divisor of a rational function on $C$ \cite{Dolg}*{Eq. 5.12}. This shows that we have $\alpha_T \sim \alpha_{I\smallsetminus T}$ and thus we can always pick $| T | \leq g+1$. Finally, we can also assume that $|T| \equiv g+1 \mod(2)$ by adding or removing $c_{2g+2}$ to~$T$, which does not change the element $\alpha_T$. Every element in ${\rm Pic}^0(C)\left[2\right]$ is uniquely represented by an element $\alpha_T$ as such unless $|T|$ is exactly $g+1$, in which case $\alpha_T=\alpha_{I\smallsetminus T}$ and there are exactly two subsets of $I$ representing the same element.

Now, we know that $(g-1)c_{2g+2}$ is a theta characteristic of $C$, which immediately implies that every theta characteristic can be written as
\[\theta_T = \alpha_T + (g-1)c_{2g+2},\]
for some $T$ as above. Then $\theta_T$ is even if and only if $|T| \equiv g+1 \mod(4)$.

\section{Curves of genus three}

We begin by studying the case $g=3$. This will allow us to compute the restriction map from the Brauer group of $\Mcal_3$ to the Brauer group of $\Hcal_3$. 

Recall the description of the Brauer group of $\Hcal_3$ from \cite{DilPirBr}*{Thm. A}:
\[{\rm Br}(\Hcal_3)_{\ell}={\rm Br}(\bfk)_{\ell} \oplus \H^1(\bfk,\ZZ/28\ZZ)_{\ell} \oplus \alpha_2(\mathcal{W}_3)\cdot \ZZ/2\ZZ \oplus w_2 \cdot \ZZ/2\ZZ.\]
Here the $\H^1(\bfk,\ZZ/28\ZZ)$ component comes from the cap product 
\[{\rm Pic}(\Hcal_3) = \ZZ/28\ZZ = \H^1(\Hcal_3, \mu_{28}) \otimes \H^1(\bfk,\ZZ/28\ZZ)\to \H^2(\Hcal_3,\mu_{28}),\]
the element $\alpha_2(\mathcal{W}_3)$ is the second Galois-Stiefel-Whitney class coming from the Weierstrass divisor; note that the two-torsion submodule 
\[{\rm H}^1(\bfk,\ZZ/28\ZZ)_2 = {\rm H}^1(\bfk,\ZZ/2\ZZ)\]
is generated by $\alpha_1(\mathcal{W}_3)$. The element $w_2$ comes from ${\rm BPGL}_2$ and sends a curve~$C$ to the class $\left[ C/\iota\right]$ in the Brauer group.  Another way to see the element $w_2$ is through the isomorphism ${\rm PGL}_2 \simeq {\rm SO}_3$. Given a conic $Q$ cut out by a normalized quadratic form $q\in \mathcal{O}_{\PP^2_F}(2)$ we have
\[w_2(Q) = \alpha_2^{\rm SW}(q)-\alpha_2^{\rm SW}(q_0)\]
where $q_0$ is any normalized form defining a rational conic. For example, if 
\[q_0 = -x_0^2-x_1^2+x_2^2,\]
then, an explicit computation yields
\[w_2(Q) = \alpha_2^{\rm SW}(q)+\lbrace -1, -1 \rbrace.\]
We know that the pullback of $\alpha_2(S) \in {\rm Br}(\Mcal_3)$ is equal to
\[r \cdot \alpha_1(\mathcal{W}_3) + s \cdot \alpha_2 (\mathcal{W}_3) + t \cdot w_2, \]
where $r \in \H^1(\bfk,\ZZ/14\ZZ)$ and $s,t \in \ZZ/2\ZZ$. We will construct two curves $C, C'$ to test respectively $s,t$ and $r$.

\subsection{First test curve}
For our first test curve $C/F$ we want the sugroup 
\[{\rm Br}(\bfk) \oplus {\rm H}^1(\bfk,\ZZ/2\ZZ) \oplus \alpha_2(\mathcal{W}_3)\cdot \ZZ/2\ZZ\]
to map injectively to ${\rm Br}(F)$.

Set $F=\bfk(a_1,a_2,a_3)$, $E_i = F(\sqrt{a_i}), E_{ij}=F(\sqrt{a_i},\sqrt{a_j}), K=F(\sqrt{a_1},\sqrt{a_2},\sqrt{a_3})$. Consider the points
\[
\lbrace p_1, \ldots, p_6, q_7, q_8 \rbrace=\lbrace (\pm\sqrt{a_i}:1), 0, \infty \rbrace \subset \PP^1_K.
\]
The action of ${\rm Gal}(K/F)$ exchanges the points $p_{2i-1}$ and $p_{2i}$. There is clearly a divisor $D \subset \PP^1_F$ whose inverse image in $\PP^1_K$ is $p_1 + \ldots + q_8$. Let $(C,\iota)$ be an hyperelliptic curve over $F$ such that $C/\iota=\PP^1_F$ and the image of the Weierstrass locus is $D$. We will abuse notation and denote the points in the Weierstrass divisor with the same names as their images.

As the extension $F/\bfk$ is purely transcendental we have ${\rm Br}(\bfk) \subset {\rm Br}(F)$. The \'etale algebra given by the Weierstrass divisor is 
\[W_C = E_1 \times E_2 \times E_3 \times F^2,\]
which shows that
\[\alpha_1(W_C)=\lbrace a_1a_2a_3 \rbrace,\, \alpha_2(W_C) = \lbrace a_1, a_2 \rbrace + \lbrace a_1, a_3 \rbrace +\lbrace a_2, a_3 \rbrace. \]
Given an element 
\[x = x_0 + \alpha_1(\mathcal{W}_3) \cdot x_1 + \alpha_2(\mathcal{W}_3) \cdot x_2,\]
with $x_0 \in {\rm Br}(\bfk), x_1 \in \H^1(\bfk, \ZZ/2\ZZ)$ and $x_2 \in \ZZ/2\ZZ$, then the pullback $x_C \in {\rm Br}(F)$ is equal to
\[x_0 + \lbrace a_1a_2a_3 \rbrace \cdot x_1 + \left(\lbrace a_1, a_2 \rbrace + \lbrace a_1, a_3 \rbrace +\lbrace a_2, a_3 \rbrace\right) \cdot x_2.\]
We have that $\partial_{a_1=0}(x_C) = x_1 + \lbrace a_2a_3 \rbrace \cdot x_2$ and $\partial_{a_1=0}(\partial_{a_2=0}(x_C))=x_2$, which immediately implies that for $x_C$ to be zero we must have $x_2=x_1=x_0=0$.

We now turn to the theta characteristics of $C$. By the discussion in \ref{sec: hypertheta}, the set of odd theta characteristics of $C_K$ is equal to the set 
\[\lbrace\theta_T \mid |T| =2 \rbrace \]
of these 
\begin{itemize}
    \item The three corresponding to couples of conjugate points and the one corresponding to $\lbrace q_7, q_8\rbrace$ are defined over $F$.
    \item The twelve corresponding to a $p$ point and a $q$ point are defined over the field of definition of the $p$ point.
    \item The twelve corresponding to a pair of non-conjugated $p$ points are defined over the corresponding degree four extension.
\end{itemize}

We conclude that
\[S^-_C=F^4 \times E_1^2 \times E_2^2 \times E_3^2 \times E_{12} \times E_{13} \times E_{23}.\]

\subsection{Second test curve}
For the second curve $C'$ we want 
\[\alpha_1(W_{C'})=\alpha_2(W_{C'})=0,\; w_2(C')\neq0.\]

For any base field $\bfk$ of characteristic different from $2$, we can pick an irrational smooth conic $Q$ over $F=\bfk(a_1,a_2)$. For example, we can pick the quadratic form
 \[q(x,y,z)=a_1x^2 + a_2y^2 + a_1^{-1}a_2^{-1}z^2,\]
 which is nontrivial as $\alpha_2^{\rm SW}(q)=\lbrace a_1,a_2 \rbrace + \lbrace -1, a_1a_2 \rbrace$. The conic $Q$ splits over a degree two extension $F'/F$, e.g. $F'=F(\sqrt{-a_2})$ and in particular it has infinitely many $F'$-valued points.
\begin{rmk}
 As a sanity check, computing the class $w_2(C'_{F'})$ yields:
 \begin{align*}
   w_2(C'_{F'}) =& \alpha_2^{\rm SW}(q) + \lbrace -1, -1\rbrace \\&=\lbrace a_1,a_2 \rbrace + \lbrace -1, a_1a_2 \rbrace + \lbrace -1, -1\rbrace\\&=
 \lbrace a_1, -(\sqrt{-a_2})^2\rbrace + \lbrace -1, a_1 \rbrace + \lbrace -1, -(\sqrt{-a_2})^2\rbrace + \lbrace -1, -1 \rbrace\\&=
 \lbrace a_1, -1 \rbrace + \lbrace a_1, -1 \rbrace + \lbrace -1, -1 \rbrace + \lbrace -1, -1 \rbrace=0,  
 \end{align*}
 as expected.
\end{rmk}

 Its Picard group is generated by the class $\left[ x \right]$ of any $F'$-valued point. Pick four more $F'$ valued points $x_1,\ldots, x_4$. There is a section $s$ of $\mathcal{O}_Q(4x)$ whose zero locus is exactly $x_1,\ldots, x_4$. Then the triple
 \[(Q/F, \mathcal{O}_Q(-2x), s)\]
 is a genus three smooth hyperelliptic curve over $C'/F$ with 
 \[w_2(C')=\lbrace a_1,a_2 \rbrace + \lbrace -1, a_1a_2 \rbrace.\] 
 The Weierstrass divisor is given by four copies of $\Spec(F')$, so that
 \[\alpha_1(W_{C'})=\alpha_2(W_{C'})=0.\]
  Write $p_4, p'_4$ for the fiber of $x_4$ in $C'_{F'}$. Every theta characteristic on $C'_{F'}$ can be written as $\alpha_T + 2p_4$. Note that as $2p'_4-2p_4\sim 0$ we have 
  \[\alpha_T + 2p_4 = \alpha_T + 2p'_4,\]
  which shows that the field of definition of $\theta_T$ is the same as the field of definition of $\alpha_T$. Repeating the computation above, we get 
 \[S^-_{C'}=F^4 \times (F')^{12}.\]

\subsection{Computation of the pullback}
Our setup will easily allow us to show the following:

\begin{thm}
The pullback of $\alpha_2(S) \in {\rm Br}(\Mcal_3)$ to ${\rm Br}(\Hcal_3)$ is equal to
\[
\alpha_{2}(\mathcal{W}_3) + \lbrace -1\rbrace \alpha_1(\mathcal{W}_3).
\]
In particular the map $i^*:{\rm Br}(\Mcal_3) \to {\rm Br}(\Hcal_3)$ is injective.
\end{thm}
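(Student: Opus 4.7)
The strategy is to pin down the three coefficients $r$, $s$, $t$ in the ansatz
\[i^*\alpha_2(\mathcal{S}_3^-) = r \cdot \alpha_1(\mathcal{W}_3) + s \cdot \alpha_2(\mathcal{W}_3) + t \cdot w_2\]
by evaluating both sides on the two test hyperelliptic curves $C$ and $C'$ constructed in the previous subsections. Each evaluation reduces to a direct Milnor K-theory computation of the total class $\alpha_{\rm tot}$ of an \'etale algebra already written out explicitly, combined with the multiplicativity recalled in Section~\ref{sec: StiefelW}.

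First I would compute $\alpha_2(S^-_C)$ from the splitting $S^-_C = F^4 \times E_1^2 \times E_2^2 \times E_3^2 \times E_{12} \times E_{13} \times E_{23}$, using $\alpha_{\rm tot}(F^4)=1$, $\alpha_{\rm tot}(E_i)=1+\lbrace a_i\rbrace$ and the previously computed $\alpha_{\rm tot}(E_{ij})=1+\lbrace a_i,a_j\rbrace+\lbrace -1,a_ia_j\rbrace$. The identity $\lbrace a,a\rbrace=\lbrace -1,a\rbrace$ gives $\alpha_{\rm tot}(E_i)^2 = 1+\lbrace -1,a_i\rbrace$, and the cancellation $\sum_{i<j}\lbrace -1,a_ia_j\rbrace = \lbrace -1,(a_1a_2a_3)^2\rbrace = 0 \pmod 2$ collapses the degree-two piece to $\alpha_2(S^-_C) = \alpha_2(W_C) + \lbrace -1\rbrace\alpha_1(W_C)$. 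Since $C/\iota=\PP^1_F$ is split we have $w_2(C)=0$, and the iterated ramification calculation from the first test curve subsection shows that $\lbrace a_1a_2a_3\rbrace$ and $\alpha_2(W_C)$ remain independent in ${\rm H}^\bullet(F,\ZZ/2\ZZ)$; matching coefficients therefore forces $s=1$ and $r=\lbrace -1\rbrace$.

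Second, on $C'$ the splitting $S^-_{C'} = F^4 \times (F')^{12}$ with $F' = F(\sqrt{-a_2})$ yields $\alpha_{\rm tot}(S^-_{C'}) = (1+\lbrace -a_2\rbrace)^{12}$. A mod-$2$ binomial computation, for example via Kummer's criterion applied to the binary expansion $12 = 1100_2$, shows $\binom{12}{2}\equiv 0$, so this product has no degree-two term. Hence $\alpha_2(S^-_{C'}) = 0$, while $\alpha_1(W_{C'}) = \alpha_2(W_{C'}) = 0$ by construction and $w_2(C') = \lbrace a_1,a_2\rbrace + \lbrace -1,a_1a_2\rbrace$ is nonzero in ${\rm Br}(F)$. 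The resulting identity $t \cdot w_2(C')=0$ forces $t=0$, completing the formula $i^*\alpha_2(\mathcal{S}_3^-) = \alpha_2(\mathcal{W}_3) + \lbrace -1\rbrace\alpha_1(\mathcal{W}_3)$.

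For the injectivity statement, I would combine this formula with the Di~Lorenzo-Pirisi decomposition ${\rm Br}(\Mcal_3)_\ell = {\rm Br}(\bfk)_\ell \oplus \ZZ/2\ZZ$ (for even $\ell$), in which the $\ZZ/2\ZZ$-factor is generated by $\alpha_2(\mathcal{S}_3^-)$: the constant summand ${\rm Br}(\bfk)_\ell$ maps injectively into the corresponding summand of ${\rm Br}(\Hcal_3)_\ell$, while the generator pulls back to a class with nonzero $\alpha_2(\mathcal{W}_3)$-coordinate in the direct-sum description of ${\rm Br}(\Hcal_3)_\ell$ recalled at the start of this section, hence is itself nonzero. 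The main obstacle I expect is the careful bookkeeping required in extracting the degree-two component of the product $\prod_i(1+\lbrace -1,a_i\rbrace)\cdot\prod_{i<j}(1+\lbrace a_i,a_j\rbrace+\lbrace -1,a_ia_j\rbrace)$: the cross terms between factors have to be handled carefully and the identity $\lbrace a,a\rbrace=\lbrace -1,a\rbrace$ invoked at the right moments to reach the clean answer above.
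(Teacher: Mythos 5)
Your proposal is correct and follows essentially the same route as the paper: the same ansatz $i^*\alpha_2(\mathcal{S}_3^-)=r\,\alpha_1(\mathcal{W}_3)+s\,\alpha_2(\mathcal{W}_3)+t\,w_2$, the same two test curves with the same \'etale-algebra splittings, the same degree-two extraction giving $\alpha_2(S^-_C)=\alpha_2(W_C)+\lbrace -1\rbrace\alpha_1(W_C)$ and $\alpha_2(S^-_{C'})=0$, and the same residue argument to separate coefficients. Your added details (the cancellation $\sum_{i<j}\lbrace -1,a_ia_j\rbrace=0$ and the parity of $\binom{12}{2}$) are exactly the computations the paper leaves implicit, and they check out.
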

\begin{proof}

Recall that $i^*\alpha_2(S)=r \cdot \alpha_1(\mathcal{W}_3) + s \cdot \alpha_2 (\mathcal{W}_3) + t \cdot w_2$ for some $r \in \H^1(\bfk,\ZZ/14\ZZ)$ and $s,t \in \ZZ/2\ZZ$. We begin by computing $r$ and $s$:

First we focus on the first test curve $C$. As by construction $C/\iota=\PP^1_F$ we have that~$w_2(C)=0$.
We showed that the subgroup 
\[ {\rm Br}(\bfk) \oplus {\rm H}^1(\bfk,\ZZ/2\ZZ) \oplus \alpha_2(\mathcal{W}_3)\cdot \ZZ/2\ZZ\]
injects to ${\rm Br}(F)$, with 
\[\alpha_1(W_C)=\lbrace a_1a_2a_3 \rbrace,\, \alpha_2(W_C) = \lbrace a_1, a_2 \rbrace + \lbrace a_1, a_3 \rbrace +\lbrace a_2, a_3 \rbrace.\]
Moreover we know that
\[S^-_C=F^4 \times E_1^2 \times E_2^2 \times E_3^2 \times E_{12} \times E_{13} \times E_{23}.\]
Then an immediate computation gives us
\[\alpha_2(S^-_C)=\lbrace -1, a_1a_2a_3 \rbrace + \lbrace a_1, a_2 \rbrace+ \lbrace a_1, a_3 \rbrace + \lbrace a_2, a_3 \rbrace\]
proving that $r=\lbrace -1 \rbrace$ and $s=1$.

Now we want to prove that $t=0$. We know that 
\[w_2(C')=\alpha_2^{\rm SW}(Q)\neq 0.\]
Moreover $\alpha_1(W_{C'})=\alpha_2(W_{C'})=0$, which shows that we have $t=0$ if and only if~$\alpha_2(S_{C'})=0$. The latter is clear as $S_{C'}=F^4 \times (F')^{12}$.
\end{proof}

\subsection{Even theta characteristics}

As a toy case for the general computation in the next section we now consider the even theta characteristics of $C$. These correspond to the subsets $T$ such that $| T|  \equiv 0\mod(4)$. Moreover it is relevant to note that if $| T | = 4$ then $T$ induces the same theta characteristic as its complement. We have
\begin{itemize}
    \item The empty set, which is defined over $F$.
    \item The three sets corresponding to the two rational points and two conjugated points, which are defined over $F$.
    \item The twelve sets corresponding to two conjugate points, one non rational point and $q_8$. Taking their complements we get the twelve sets corresponding to two conjugate points, one non rational point and $q_7$.
    \item The twelve sets corresponding to the two rational points and two non-conjugated points, which are defined over the corresponding degree four extension. Taking their complements, we obtain the twelve sets corresponding to two conjugated points and two non-conjugated points, which are defined over the corresponding degree four extension.
    \item The eight sets corresponding to three non-conjugated points and $q_7$, which are defined over the degree eight extension. Taking their complements we obtain the eight sets corresponding to three non-conjugated points and $q_8$.
\end{itemize}

This shows that 
\[S^{+}_C = F^4  \times \prod_{i} E_i^2 \times \prod_{i< j} E_{ij} \times K. \]

We saw in Section \ref{sec: StiefelW} that
\[\alpha_{\rm tot}(K)=1+\lbrace -1 , a_1, a_2, a_3\rbrace + \lbrace -1 \rbrace^2(\lbrace a_1,a_2 \rbrace + \lbrace a_1, a_3 \rbrace + \lbrace a_2, a_3 \rbrace) + \lbrace -1 \rbrace^3\lbrace a_1a_2a_3\rbrace\]
and thus
\[\alpha_{\rm tot}(S^{+}_C)=\alpha_{\rm tot}(S^{-}_C)\alpha_{\rm tot}(K)=1 + \lbrace a_1,a_2 \rbrace + \lbrace a_1, a_3 \rbrace + \lbrace a_2, a_3 \rbrace +  \]
\[\lbrace -1 \rbrace^7\lbrace a_1 a_2 a_3 \rbrace + \lbrace -1 \rbrace^6(\lbrace a_1,a_2 \rbrace + \lbrace a_1, a_3 \rbrace + \lbrace a_2, a_3 \rbrace) + \lbrace -1 \rbrace^5\lbrace a_1, a_2, a_3 \rbrace\]
plus terms of higher degree.

\section{Theta characteristics in higher genus}
\subsection{Our test curve}

Consider the field $F=\bfk(a_1,\ldots,a_g)$, its field extension $K=F(\sqrt{a_1},\ldots, \sqrt{a_g})$ and points

\[p_1, p_2, \ldots p_{2g-1}, p_{2g}, q_{2g+1}, q_{2g+2} \in \PP^1_K\]
such that for all $j \leq g$ the points $p_{2j-1}, p_{2j}$ are defined over $F(\sqrt{a_j})$ and are conjugated under the action of ${\rm Gal}(F(\sqrt{a_j})/F)=\ZZ/2\ZZ$, and $q_{2g+1}$ and $q_{2g+2}$ are defined over $F$. 

The divisor 
\[D_K=\sum_{j\leq 2g}p_j + q_{2g+1} + q_{2g+2} \subset \PP^1_K\] 
descends to a divisor $D$ on $\PP^1_F$ and thus we can define an hyperelliptic curve $C/F$ whose quotient by the hyperelliptic involution is $\PP^1_F$ and such that the image of $W_C$ is equal to $D$. Again, we will use the same names for points in the Weierstrass divisor and their images.

Given any subset $A \subseteq \lbrace 1, \ldots, g\rbrace$ we write $E_A$ for the extension of $F$ generated by the square roots of $a_s$ for all $s \in A$.

\begin{lm}\label{lm:def_field}
A theta characteristic $\theta_T$ of $C_K$ has minimal field of definition $E_A$ if and only if for every $a \in A$, exactly one of $2a-1, 2a$ belongs to $T$.
\end{lm}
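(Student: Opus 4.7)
The plan is to compute the stabilizer of $\theta_T$ under the action of $\mathrm{Gal}(K/F)$ and use Galois theory to identify the minimal field of definition as the fixed field of that stabilizer.

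First I would unpack the Galois action. Since $K/F$ is a multiquadratic extension with $\mathrm{Gal}(K/F)=\prod_{j=1}^{g}\langle\sigma_j\rangle$ where $\sigma_j$ fixes $\sqrt{a_i}$ for $i\neq j$ and sends $\sqrt{a_j}\mapsto-\sqrt{a_j}$, the induced action on the set of Weierstrass points swaps $p_{2j-1}\leftrightarrow p_{2j}$ and fixes every other $p_i$, $q_{2g+1}$, $q_{2g+2}$. Consequently $\sigma_j$ fixes $c_{2g+2}$ and thus fixes the theta characteristic $(g-1)c_{2g+2}$, so computing $\sigma_j(\theta_T)$ reduces to computing $\sigma_j(\alpha_T)$. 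At the level of subsets, $\sigma_j T$ is obtained from $T$ by swapping the indices $2j-1$ and $2j$ whenever exactly one of them lies in $T$, and $\sigma_j T=T$ otherwise.

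Second, I would translate ``$\sigma_j$ fixes $\theta_T$'' into a combinatorial statement using the uniqueness part of Section \ref{sec: hypertheta}. Since $\alpha_T=\alpha_{I\setminus T}$ is the only nontrivial collision, we have $\alpha_{\sigma_j T}=\alpha_T$ in $\mathrm{Pic}^0(C_K)[2]$ if and only if either $\sigma_j T = T$ or $\sigma_j T = I\setminus T$ as subsets of $I$. The first possibility occurs exactly when $|T\cap\{2j-1,2j\}|\in\{0,2\}$, i.e.\ when $j\notin A$. The second possibility would require $T$ and $I\setminus T$ to differ in exactly the two positions $2j-1,2j$; but $|T\triangle(I\setminus T)|=|I|=2g+2$, which is strictly greater than $2$ for $g\geq 1$, ruling it out. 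Hence for each generator, $\sigma_j$ stabilises $\theta_T$ if and only if $j\notin A$.

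Finally, the stabiliser of $\theta_T$ in $\mathrm{Gal}(K/F)$ is the subgroup $\prod_{j\notin A}\langle\sigma_j\rangle$, whose fixed field is precisely $F(\sqrt{a_j}\mid j\in A)=E_A$. By Galois descent this is the minimal field of definition of $\theta_T$, proving the lemma. The only subtle step is the second one above, where one must verify that the complement relation $\alpha_T=\alpha_{I\setminus T}$ cannot accidentally enlarge the stabiliser; this is the main obstacle, but it dissolves once one observes that a single transposition $\sigma_j$ is far too small to flip the complement when $g\geq 1$.
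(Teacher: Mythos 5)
Your overall strategy---compute the stabilizer of $\theta_T$ in ${\rm Gal}(K/F)$ and take its fixed field---is the same as the paper's, and your reduction to the combinatorics of $T$ via the near-uniqueness of the representation $\alpha_T$ (the only collision being $\alpha_T=\alpha_{I\smallsetminus T}$ when $|T|=g+1$) is sound. There is, however, one genuine logical gap in your final step: a subgroup of $(\ZZ/2\ZZ)^g$ is not determined by which of the standard generators $\sigma_j$ it contains (for instance $\langle\sigma_1\sigma_2\rangle$ contains neither $\sigma_1$ nor $\sigma_2$), so from ``$\sigma_j$ stabilizes $\theta_T$ if and only if $j\notin A$'' you cannot conclude that the stabilizer equals $\prod_{j\notin A}\langle\sigma_j\rangle$; you must also rule out elements $\sigma=\prod_{j\in B}\sigma_j$ with $B\cap A\neq\emptyset$. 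Fortunately your own symmetric-difference count closes this: such a $\sigma$ satisfies $|\sigma T\,\triangle\,T|=2|B\cap A|$, which is nonzero and at most $2g<2g+2=|T\,\triangle\,(I\smallsetminus T)|$, so $\sigma T$ equals neither $T$ nor $I\smallsetminus T$ and hence $\alpha_{\sigma T}\neq\alpha_T$. The paper sidesteps this issue by a slightly different bookkeeping: it shows ${\rm Gal}(K/E_A)$ is contained in the stabilizer and then exhibits $2^{|A|}$ distinct conjugates of $\theta_T$, so the orbit--stabilizer relation forces equality. Your route is equally valid once the missing line above is added.
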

\begin{proof}
Consider the subgroup ${\rm Gal}(K/E_A)$ 
of ${\rm Gal}(K/F)$. It has order $2^{g-\mid A \mid}$ and it is generated by $\lbrace\sigma_t \mid t\notin A\rbrace$ where $\sigma_t$ swaps $\pm \sqrt{a_t}$ and fixes $\sqrt{a_s}$ for $s \neq t$. We can write
\[\theta_T = \sum_{s \in A} (x_s - q_{2g+2})  + \sum_{t \notin A} (p_t + p'_t - 2q_{2g+2}) + Q,\]
where $x_a$ is either $p_{a}$ or $p'_{a}$ and $Q$ is a combination of $q_{g+1}, q_{g+2}$. It is then immediate that for any $t \notin A$ the element $\sigma_t$ fixes $\theta_T$, which shows that the field of definition of~$\theta_T$ must be contained in $E_A$.
Finally, consider the possible conjugates of $\theta_T$ through the action of ${\rm Gal}(K/F)$. Obviously each $x_s$ can independently be acted upon to be either $p_{2s-1}$ or $p_{2s}$, and each choice gives a different element, which shows that $\theta_T$ has at least $2^{\mid A \mid}$ conjugates, so its field of definition must be exactly~$E_A$,  concluding our proof. 
\end{proof}

Define $n^-_A, n^+_A$ as the number of times the field $E_A$ appears in the étale algebras $S^{-}_C$ and $S^+_C$, respectively. By Lemma \ref{lm:def_field} we must have 
\[\sum_{A\subset \lbrace 1,\ldots,g \rbrace} 2^{|A|}n^-_A = 2^{g-1}(2^g -1), \quad \sum_{A\subset \lbrace 1,\ldots,g \rbrace} 2^{|A|}n^+_A = 2^{g-1}(2^g +1).\]
We claim the following equality holds:

\begin{prop}
In the situation described above, we have $n^-_A=2^{g-1-|A|}$ for $|A| < g$, and for $A=\lbrace 1, \ldots, g\rbrace$ we have $n^-_A=0$.    
\end{prop}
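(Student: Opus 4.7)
The plan is to parametrize the theta characteristics of $C_K$ with field of definition exactly $E_A$ via \Cref{lm:def_field}, translate the odd/even criterion into a congruence on the size of the representing subset, and then count via a roots-of-unity filter applied to an explicit generating polynomial. The boundary case $|A|=g$ will be handled by a direct inspection.

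Working with $T'\subseteq I':=\{1,\ldots,2g+1\}$ (so that $\alpha_{T'}=\alpha_{I'\setminus T'}$ and toggling $2g+2$ does not change $\alpha_{T'}$), the subsets with field of definition exactly $E_A$ are parametrized by tuples $\bigl((x_a)_{a\in A},(\epsilon_t)_{t\notin A},\delta\bigr)$ with $x_a\in\{2a-1,2a\}$, $\epsilon_t\in\{0,1\}$ indicating whether both $2t-1,2t$ lie in $T'$, and $\delta\in\{0,1\}$ recording whether $2g+1\in T'$. This gives $2^{g+1}$ tuples, matched $2$-to-$1$ onto the $2^g$ elements of ${\rm Jac}(C)_2$ with that field of definition, and satisfies $|T'|=|A|+2\sum_t\epsilon_t+\delta$. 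A short case analysis using the rule \emph{even} $\Leftrightarrow$ $|T|\equiv g+1\pmod 4$ applied to the canonical $T\subseteq I$ (with $|T|\leq g+1$ and $|T|\equiv g+1\pmod 2$) shows that $\theta_{T'}$ is odd iff $|T'|\equiv g-1$ or $g-2\pmod 4$.

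The count then follows by plugging in the generating polynomial
\[ f(z)\;=\;\sum_{T'\text{ admissible}} z^{|T'|}\;=\;2^{|A|}z^{|A|}(1+z^2)^{g-|A|}(1+z) \]
and applying the standard roots-of-unity filter $\#\{T'\colon|T'|\equiv r\pmod 4\}=\tfrac{1}{4}\sum_{j=0}^{3}i^{-rj}f(i^j)$. When $|A|<g$ the factors $(1+z)$ and $(1+z^2)^{g-|A|}$ force $f(-1)=f(i)=f(-i)=0$, so each residue class contains exactly $\tfrac{1}{4}f(1)=2^{g-1}$ tuples; the two odd classes together contribute $2^{g}$ tuples, hence $2^{g-1}$ elements of ${\rm Jac}(C)_2$, hence $2^{g-1}/2^{|A|}=2^{g-1-|A|}$ Galois orbits, giving $n^-_A=2^{g-1-|A|}$. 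When $A=\{1,\ldots,g\}$ the roots-of-unity trick collapses, but now every admissible $T'$ has $|T'|\in\{g,g+1\}$, so the canonical $T$ always has size exactly $g+1\equiv g+1\pmod 4$; every such characteristic is even and $n^-_{\{1,\ldots,g\}}=0$.

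The step most prone to mistakes is the translation of the odd/even criterion from the paper's representative $T\subseteq I$ (where it reads $|T|\equiv g-1\pmod 4$) to the symmetric $T'\subseteq I'$: one must verify that the condition $|T'|\equiv g-1,g-2\pmod 4$ is invariant under $T'\leftrightarrow I'\setminus T'$ (which sends $|T'|$ to $2g+1-|T'|$ and interchanges the two odd classes) and is consistent with the $2g+2$-toggle used to land in the canonical parity. Once this compatibility is established, the generating-function computation is entirely mechanical.
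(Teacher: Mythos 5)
Your proof is correct, and it takes a genuinely different route from the paper's. The paper fixes a \emph{minimal} representative $T$ realizing the field of definition $E_A$, counts admissible enlargements $T'\supset T$ through a case analysis on $|A|\bmod 4$ (written out only for $g\equiv 3\bmod 4$), and in this way obtains only the lower bound $n^-_A\geq 2^{g-1-|A|}$; equality is then forced by comparison with the total degree $2^{g-1}(2^g-1)$ of $S^-_C$. You instead parametrize \emph{all} representatives $T'\subseteq\{1,\dots,2g+1\}$ with field of definition exactly $E_A$ at once, record their sizes in the generating polynomial $f(z)=2^{|A|}z^{|A|}(1+z^2)^{g-|A|}(1+z)$, and extract the two odd residue classes modulo $4$ by a roots-of-unity filter; since $f$ vanishes at $-1$ and $\pm i$ when $|A|<g$, the four residue classes are equidistributed and the exact value $n^-_A=2^{g-1-|A|}$ drops out uniformly in $g$ and $|A|$, with the degree identity $\sum_A 2^{|A|}n^-_A=2^{g-1}(2^g-1)$ now available as a consistency check rather than a needed input. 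The two points you flag as delicate are indeed the ones to verify, and both check out: the parity rule translates to ``$\theta_{T'}$ odd iff $|T'|\equiv g-1$ or $g-2\pmod 4$,'' this condition is stable under toggling $2g+2$ and under $T'\mapsto I'\setminus T'$ (which swaps the two odd residues, so the $2$-to-$1$ identification respects oddness), and for $A=\{1,\dots,g\}$ the constraint $|T'|\in\{g,g+1\}$ forces the canonical representative to have size exactly $g+1$, hence even. Beyond being uniform and self-contained, your method also yields $n^+_A$ for free by reading off the other two residue classes (including the extra halving when $|T'|=g+1$), which would streamline the paper's subsequent proposition as well.
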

\begin{proof}
    Note that when $A=\lbrace 1, \ldots, g\rbrace$ obviously $n^-_{A}=0$ as we can always pick $|T| \leq g+1$ and when $|T|=g$ the class is even.
To prove the proposition, first we need to understand, given a theta characteristic $\theta_T$ how to generate a new theta characteristic $\theta_{T'}$ with the same field of definition that is not in the same conjugacy class as $\theta_T$. The most obvious way is to add elements to $T$, so that $T' \supset T$. We must have $|T| \equiv |T'| \mod(4)$, and their stabilizers must be the same. Thus the smallest sets we can add are
\begin{itemize}
    \item Two couples of conjugate points $p_{2i-1},p_{2i}$ and $ p_{2j-1}, p_{2j}$.
    \item Two conjugate points $p_{2i-1},p_{2}$ and the two rational points $q_{2g+1}, q_{2g+2}$.
\end{itemize}
Now we want to count the ways we can extend a \emph{minimal} such $\theta_T$, i.e. such that no smaller set $T$ can realize the field of definition $E_A$. Write $|A|=r$, and consider for simplicity the case where $g \equiv 3 \mod(4)$, so that $|T|\equiv 2 \mod(4)$. We reason based on the remainder of $r$ mod $4$:

\begin{enumerate}
    \item[$r \equiv 2$:] A minimal $T$ has exactly $r$ elements, and for each extension $T' \supset T$ we have to pick either a couple of conjugate points or the two rational points, so we are picking between $g+1-r$ choices. The number of choices we have to make to get an extension is even, and the maximum order of $T$ is $g-1$, so we have the equation $r+2i \leq g-1 \sim i \leq (g-1-r)/2$. In other words, any subset of $g+1-r$ elements with an even number of elements and with less than half of the total elements (note that $(g+1-r)/2$ is odd) will give us a different, non conjugate theta characteristic with the same field of definition. Restricting to even order subsets halves the choices, and restricting to subsets with less than half of the total elements halves them again, so we have a total of $2^{g-r+1}/4=2^{g-r-1}$ possibilities.

    \item[$r \equiv 0$:] A minimal $T$ must have $r+2$ elements, so one for each element of $I$ plus either two conjugate points or the two rational points, for which we have $g-r+1$ choices. Then to get an extension we have to make the same choice an even number of times, so we added $2(2j+1)$ elements. We must have $r+2(2j+1) \leq g-1 \sim 2j+1 \leq (g-1-r)/2$. These correspond to the subsets of $g+1-r$ elements with an odd number of elements and less than half the total elements (note that $(g-r+1)/2$ is even). So again we have a total of $2^{g-r-1}$ possibilities.
    
    \item[$r \equiv 1$:] A minimal $T$ must have $r+1$ elements, so we must necessarily add one of the two rational points. Then we can do the same reasoning as in the case $r \equiv 2 \mod(4)$ but we start with $g-r$ possible choices, and following the same reasoning as for $r\equiv 2 \mod(4)$ we are counting the subsets of $g-r$ elements of even order with less than half the elements. Including the first choice between the two rational points we get exactly $2\cdot 2^{g-r-2}=2^{g-r-1}$ possibilities.
    
    \item[$r \equiv 3$:] A minimal $T$ must have $r+3$ elements, so we must add one of the two rational points and two conjugate points to begin with. Following the same reasoning as for $r\equiv 0 \mod(4)$ we are counting the subsets of $g-r$ elements of odd order and with less than $(g-r-1)/2$ elements, so counting the initial choice we get $2\cdot 2^{g-r-2}=2^{g-r-1}$ possibilities.
\end{enumerate}
The same reasoning works, up to some permutation of the cases, for all other $\mod (4)$ equivalence classes of $g$. This proves that for all $|A|<g$ we have that $n^-_{A}$ is at least $2^{g-r-1}$. But then adding up the degrees of all the copies of $E_A$ we found we get $2^{g-1}(2g-1)={\rm deg}(S^-_C/F)$, so the opposite inequality must also hold, proving our claim.
\end{proof}

\begin{prop}
In the situation described above, we have $n^+_A=2^{g-1-|A|}$ for $|A| < g$, and for $A=\lbrace 1, \ldots, g\rbrace$ we have $n^+_A=1$.    
\end{prop}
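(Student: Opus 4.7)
My plan is to deduce this result from the preceding proposition by a short Galois orbit-counting argument, rather than by redoing the $r\pmod 4$ case analysis.

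First, since $(g-1)c_{2g+2}$ is a theta characteristic defined over $F$, the ${\rm Jac}(C)_2$-torsor of theta characteristics is $F$-trivial, so the number of theta characteristics defined over any subfield $E_A$ equals $|{\rm Jac}(C)_2^{H_{\bar A}}|$, where $H_{\bar A}={\rm Gal}(K/E_A)=\langle \sigma_t : t\notin A\rangle$. Using the presentation ${\rm Jac}(C)_2=\langle e_1,\ldots,e_{2g+1}\mid e_1+\cdots+e_{2g+1}=0\rangle$ with $e_i=c_i-c_{2g+2}$, on which $\sigma_t$ swaps $e_{2t-1}$ and $e_{2t}$ and fixes everything else, a direct dimension count over $\ZZ/2\ZZ$ shows that $|{\rm Jac}(C)_2^{H_{\bar A}}|=2^{g+|A|}$ (the $(1+g+|A|)$-dimensional fixed subspace of $(\ZZ/2\ZZ)^{2g+1}$ modulo the single relation $\sum_i e_i=0$).

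A M\"obius inversion on the poset $\{H_{\bar A}\}_{A\subseteq\{1,\ldots,g\}}$ then yields
\[
\#\{\text{theta characteristics with field of definition exactly }E_A\}=\sum_{B\subseteq A}(-1)^{|A|-|B|}\,2^{g+|B|}=2^g\cdot(2-1)^{|A|}=2^g,
\]
independently of $A$. Dividing by the Galois orbit size $2^{|A|}$ gives $n^+_A+n^-_A=2^{g-|A|}$; substituting the value of $n^-_A$ from the previous proposition, we obtain $n^+_A=2^{g-1-|A|}$ for $|A|<g$ and $n^+_{\{1,\ldots,g\}}=2^0-0=1$, as claimed.

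The delicate step is the fixed-point count, which requires keeping careful track of the single linear relation $\sum_i e_i=0$ in the presentation of ${\rm Jac}(C)_2$. As an alternative, one could imitate the $r\pmod 4$ case analysis of the odd case verbatim; there the only new feature is the contribution of subsets of size $g+1$, which must be counted modulo the complement involution $T\leftrightarrow I\setminus T$ --- a complication that does not arise in the odd case, since $|T|\equiv g-1\pmod 4$ excludes $|T|=g+1$, and which is precisely what makes the boundary case $|A|=g$ now contribute $n^+_A=1$ instead of $0$.
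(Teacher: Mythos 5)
Your argument is correct, and it takes a genuinely different route from the paper. The paper proves the even case by rerunning the $r \bmod 4$ case analysis from the odd case, with the extra bookkeeping that subsets $T$ of size $g+1$ must be counted modulo the involution $T\leftrightarrow I\smallsetminus T$; you instead count \emph{all} theta characteristics with minimal field of definition $E_A$ and subtract the odd count. Your global count is clean: the torsor $S_C$ is trivial over $F$, the fixed-point computation gives $2^{g+|A|}$ characteristics defined over $E_A$, and M\"obius inversion yields exactly $2^g$ with minimal field $E_A$ for \emph{every} $A$ --- a pleasant intermediate fact the paper never makes explicit, which moreover provides an independent consistency check on the preceding proposition (summing $2^{|A|}(n^+_A+n^-_A)$ over all $A$ recovers $2^{2g}$). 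The one step you should spell out is the ``delicate'' fixed-point count: in general taking $H$-invariants does not commute with taking the quotient by $\langle \sum_i e_i\rangle$, and a priori $({\rm Jac}(C)_2)^{H_{\bar A}}$ could be larger than $V^{H_{\bar A}}/\langle \sum_i e_i\rangle$ by classes $v$ with $\sigma v - v = \sum_i e_i$ for some $\sigma$. Here this cannot happen because for each generator $\sigma_t$ the coboundary $\sigma_t v - v$ lies in $\FF_2\cdot(e_{2t-1}+e_{2t})$, which never equals the full-support vector $\sum_{i=1}^{2g+1} e_i$; with that one line added, the count $2^{g+|A|}$ is justified and the rest of your deduction ($n^+_A+n^-_A=2^{g-|A|}$, hence $n^+_A=2^{g-1-|A|}$ for $|A|<g$ and $n^+_{\{1,\dots,g\}}=1$) goes through. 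Your closing remark correctly identifies why the complement involution is invisible in the odd case ($|T|\equiv g-1 \bmod 4$ rules out $|T|=g+1$) and is exactly what produces the boundary value $n^+_A=1$ at $|A|=g$.
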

\begin{proof}
    The main difference to keep in mind here is the following: if $|T|=g+1$, then $\theta_T = \theta_{T^{\rm c}}$, so the number of extensions where this happens has to be halved. On the other hand, observe that if $n$ is even/odd the subsets of a set of $2n$ elements that have an even/odd order modulo taking the complement are exactly $2^{n-2}$, and they are in correspondence with the even/odd sets with at most $n-2$ elements plus half of the even/odd sets with $n$ elements as those need to be counted together with their complement. Using this we can repeat the same reasoning as above to prove our claim.
\end{proof}

\subsection{Stiefel-Whitney classes of multiquadratic extensions}\label{sec: StiefEn}
From here on, for simplicity, we will write
\[\varepsilon = \lbrace -1 \rbrace \in \K^{\bullet}_{\rm Mil}(\bfk).\]

The aim of this section is to compute the Galois-Stiefel-Whitney classes of the extensions $E_A /F$. By functoriality, it is clearly sufficient to compute the Galois-Stiefel-Whitney classes of the \'etale algebras
\[E_n/F_n = \bfk(\sqrt{a_1}, \ldots, \sqrt{a}_n)/\bfk(a_1,\ldots,a_n).\]
If we pick the basis $1, \sqrt{a_1}, \ldots , \sqrt{a_n}, \sqrt{a_1a_2}, \ldots, \sqrt{a_1\ldots a_n}$ the trace form $q_{E_n}$ is defined by the diagonal matrix 
\[2^n{\rm diag}(1,a_1, \ldots, a_1\ldots a_n)\sim 
\begin{cases}
    {\rm diag}(1,a_1,\ldots, a_1\ldots a_n) & n \, \textnormal{even}\\
    2{\rm diag}(1,a_1,\ldots, a_1\ldots a_n) & n \, \textnormal{odd}.
\end{cases}\]
When $n$ is even we have
\[\sigma_i(1,a_1,\ldots, a_1\ldots a_n)=\sigma_i(a_1,\ldots, a_1 \ldots a_n)\]
as each monomial with $\lbrace 1\rbrace$ in it is equal to zero. For $n$ odd, first note that $\lbrace 2, 2 \rbrace = \lbrace 2, -1 \rbrace=0$ in $\K_2^{\bullet}(\bfk)$. Splitting off all the terms which include the first element, we have
\[\sigma_i(2,2a_1,\ldots, 2a_1\ldots a_n)=\lbrace 2 \rbrace\sigma_{i-1}(a_1,\ldots, a_1\ldots a_n)+\sigma_i(2a_1,\ldots, 2a_1\ldots a_n);\]
now in the second term note that an element in the form $\lbrace 2x_1, \ldots, 2x_i\rbrace$ can be split as
\[\lbrace 2x_1, \ldots, 2x_i\rbrace=\lbrace x_1, \ldots, x_i\rbrace + \lbrace 2 \rbrace\sum_{s \leq i}\prod_{q \neq s}\lbrace x_q \rbrace + \lbrace 2, 2 \rbrace (\ldots ).\]
Thus, as $\lbrace 2,2 \rbrace=0$, we have that
\begin{align*}
    \sigma_i(2,2a_1,\ldots, 2a_1\ldots a_n)
    &=\lbrace 2 \rbrace\sigma_{i-1}(a_1,\ldots, a_1\ldots a_n)
    \\&\phantom{==}+(2^n-i-1)\lbrace 2 \rbrace\sigma_{i-1}(a_1,\ldots, a_1\ldots a_n)
    \\&\phantom{==}+ \sigma_i(a_1,\ldots, a_1\ldots a_n)
    \\&=\sigma_i(a_1,\ldots, a_1\ldots a_n)+(2^n-i)\lbrace 2 \rbrace\sigma_{i-1}(a_1,\ldots, a_1\ldots a_n),
\end{align*}
where the $2^i-i-1$ coefficient corresponds to the number of ways each monomial in $\sigma_{i-1}$ can be obtained by splitting off a $2$ from a monomial in $\sigma_i$. This shows that 
\[\alpha_i^{\rm SW}(E_n)=\begin{cases}
 \sigma_i(a_1,\ldots, a_1\ldots a_n) & n \, \textnormal{even}, \\
 \sigma_i(a_1,\ldots, a_1\ldots a_n)+\lbrace 2 \rbrace \cdot \sigma_{i-1}(a_1,\ldots, a_1\ldots a_n) & n \,\textnormal{odd}, \, i \, \textnormal{even},\\
 \sigma_i(a_1,\ldots, a_1\ldots a_n) & n \,\textnormal{odd}, \, i \, \textnormal{odd}.
\end{cases}\]
Our objective is to prove the following:
\begin{prop}\label{prop:sigmastate}
We have 
\[
\sigma_i(a_1,\ldots, a_1\ldots a_n)=\begin{cases}
1 & i = 0, \\
0 & i \neq 0, 2^{n-1},\\
\varepsilon^{2^{n-1}-n}\cdot \lbrace a_1, \ldots, a_n \rbrace + (\ldots) & i = 2^{n-1}.
\end{cases}
\]
which implies
\[
\alpha_i(E_n)=\begin{cases}
1 & i = 0, \\
0 & i \neq 0, 2^{n-1},\\
\varepsilon^{2^{n-1}-n}\cdot \lbrace a_1, \ldots, a_n \rbrace + (\ldots) & i = 2^{n-1}.
\end{cases}
\]
\end{prop}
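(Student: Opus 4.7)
The plan is to package all the $\sigma_i$ into a single generating polynomial and compute it by induction on $n$. Set $x_i = \{a_i\} \in \K_2^1(F_n)$ and, for each $v = (v_1, \ldots, v_n) \in \FF_2^n$, write $\{v\} = \sum_i v_i x_i$; then the $2^n-1$ arguments of $\sigma_i$ are precisely the $\{v\}$ with $v\neq 0$. I will study
\[
P_n(T) = \prod_{v \in \FF_2^n}(T + \{v\}) = T \cdot \prod_{v \neq 0}(T + \{v\}) = \sum_{i=0}^{2^n - 1} \sigma_i \, T^{2^n - i} \in \K_2^\bullet(F_n)[T],
\]
so that proving the statement amounts to computing $P_n(T)$. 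Since $\K_2$ is $2$-torsion everything commutes and all calculations are mod $2$. The single structural relation driving the argument is $\{v\}^2 = \{b, b\} = \{-1, b\} = \varepsilon \{v\}$, where $b$ is the product of the $a_i$ with $v_i = 1$; iterating gives the power identity $(T + \{v\})^{2^k} = T^{2^k} + \varepsilon^{2^k - 1} \{v\}$ in characteristic $2$.

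The main claim, proved by induction on $n$, is
\[
P_n(T) = T^{2^n} + T^{2^{n-1}} s_n, \qquad s_n^2 = \varepsilon^{2^{n-1}} s_n,
\]
which at once gives $\sigma_i = 0$ for $i \notin \{0, 2^{n-1}\}$ and identifies $s_n = \sigma_{2^{n-1}}$. The base case $n=1$ is trivial: $P_1(T) = T^2 + Tx_1$ and $s_1 = x_1$ satisfies $s_1^2 = \varepsilon s_1$. For the inductive step, the splitting $\FF_2^n = \FF_2^{n-1} \oplus \FF_2 e_n$ yields $P_n(T) = P_{n-1}(T) \cdot P_{n-1}(T+x_n)$. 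Using the inductive form of $P_{n-1}$ together with the power identity, a short computation gives
\[
P_{n-1}(T + x_n) = P_{n-1}(T) + \varepsilon^{2^{n-2}-1} x_n \bigl( \varepsilon^{2^{n-2}} + s_{n-1} \bigr).
\]
The crucial cancellation is then
\[
s_{n-1}\bigl(\varepsilon^{2^{n-2}} + s_{n-1}\bigr) = \varepsilon^{2^{n-2}} s_{n-1} + s_{n-1}^2 = 2 \varepsilon^{2^{n-2}} s_{n-1} = 0,
\]
using the inductive square identity $s_{n-1}^2 = \varepsilon^{2^{n-2}} s_{n-1}$. This kills what would have been a $T^{2^{n-2}}$ contribution in $P_{n-1}(T) \cdot P_{n-1}(T+x_n)$, so only the $T^{2^n}$ and $T^{2^{n-1}}$ coefficients survive, with
\[
s_n = \varepsilon^{2^{n-2}} s_{n-1} + \varepsilon^{2^{n-1}-1} x_n + \varepsilon^{2^{n-2}-1} x_n s_{n-1}.
\]
Squaring this expression and reducing via $s_{n-1}^2 = \varepsilon^{2^{n-2}} s_{n-1}$ and $x_n^2 = \varepsilon x_n$ then yields $s_n^2 = \varepsilon^{2^{n-1}} s_n$, closing the induction.

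Reading off the leading term, the only monomial contributing to the coefficient of $\{a_1, \ldots, a_n\} = x_1 \cdots x_n$ in $s_n$ comes from $\varepsilon^{2^{n-2}-1} x_n s_{n-1}$ via the coefficient of $\{a_1, \ldots, a_{n-1}\}$ in $s_{n-1}$, which by induction is $\varepsilon^{2^{n-2}-(n-1)}$; the product gives $\varepsilon^{2^{n-1}-n}$, as claimed. The statement about $\alpha_i(E_n)$ then follows from the relations between $\alpha_i$, $\alpha_i^{\rm SW}$ and $\sigma_i$ already recorded above: for any $i \notin \{0, 2^{n-1}\}$ both $\sigma_i$ and the relevant $\sigma_{i\pm 1}$ appearing in the correction formulas vanish, and a short parity check on the two indices $2^{n-1} \pm 1$ handles the only potentially dangerous cases, so $\alpha_{2^{n-1}}(E_n) = s_n$.

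The main obstacle is the careful exponent bookkeeping in the induction, in particular ensuring that the squaring identity $s_n^2 = \varepsilon^{2^{n-1}} s_n$ propagates from one step to the next: the whole argument collapses if this relation fails, because it is exactly what cancels the would-be $T^{2^{n-2}}$ coefficient in the product $P_{n-1}(T) \cdot P_{n-1}(T + x_n)$ at each stage. Once one commits to the ansatz $P_n(T) = T^{2^n} + T^{2^{n-1}} s_n$, the remaining work is mechanical tracking of the exponents of $\varepsilon$.
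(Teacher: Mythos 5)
Your proof is correct, and while it computes the same objects by induction on $n$ using the same two ingredients as the paper (the splitting $\FF_2^n=\FF_2^{n-1}\oplus\FF_2 e_n$, which gives the recursion $P_n(T)=P_{n-1}(T)P_{n-1}(T+x_n)$, and the relation $x^2=\varepsilon x$), it organizes the induction around a genuinely different key lemma. The paper first proves, in the honest polynomial ring $\FF_2[x_1,\dots,x_n,y,z]$, that $\Phi_n(y+z)\equiv\Phi_n(y)+\Phi_n(z)$ mod $2$ (its Lemma \ref{lm:sum}), uses this to compare $p_n$ with the auxiliary polynomial $q_n$ and kill the low-degree components, derives a recursion for $\operatorname{pol}_{2^{n-1}}(p_n)$, unrolls it into the sum $\sum_i\Phi_i(x_{i+1};x_1,\dots,x_i)^{2^{(n-1)-i}}$, and only then reduces modulo the ideal $I_n$. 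You instead impose the relations $x_i^2=\varepsilon x_i$ from the outset and carry through a single induction the ansatz $P_n(T)=T^{2^n}+T^{2^{n-1}}s_n$ together with the quadratic identity $s_n^2=\varepsilon^{2^{n-1}}s_n$; the cancellation $s_{n-1}(\varepsilon^{2^{n-2}}+s_{n-1})=0$ then plays exactly the role of the paper's additivity lemma (indeed your ansatz makes $P_n$ visibly additive in $T$ mod $2$, so the two viewpoints are equivalent). What your version buys is economy: there is no separate $q_n$, no unrolled sum of powers of $\Phi_i$, and the exponent bookkeeping of \eqref{eq:PhiIn}--\eqref{eq:xipluse} collapses into the single recursion $s_n=\varepsilon^{2^{n-2}}s_{n-1}+\varepsilon^{2^{n-1}-1}x_n+\varepsilon^{2^{n-2}-1}x_ns_{n-1}$, which unrolls to the same closed form $\sum_{i=1}^n\varepsilon^{2^{n-1}-i}s_i(x_1,\dots,x_n)$ as \eqref{eq:coeffofpn}. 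What the paper's version buys is that the vanishing statements are first established as identities of integral/mod-$2$ polynomials before any relations are imposed, which keeps the role of the ideal $I_n$ isolated to the final evaluation; your argument instead takes place in (the image of) $\FF_2[x_1,\dots,x_n,\varepsilon]/I_n$ throughout, which is harmless here since every relation you use holds in $\K_2^{\bullet}(F_n)$. Your closing remarks on the passage from $\sigma_i$ to $\alpha_i(E_n)$ (the parity check at $i=2^{n-1}\pm1$, where the $\lbrace 2\rbrace$-corrections multiply vanishing $\sigma$'s) match what the paper does implicitly.
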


The second implication is a direct consequence of the first and the formula
\[
    \alpha_i(E)=\begin{cases}
 \alpha_i^{\rm SW}(E) &i \, \textnormal{even}, \\
 \alpha_i^{\rm SW}(E)+\lbrace 2 \rbrace \cdot \alpha_{i-1}(E) & i \, \textnormal{odd},
\end{cases}
\]
so we have to compute the classes $\sigma_i(a_1,a_2,\ldots,a_1a_2 \ldots a_n) \in \K_2^{\bullet}(F_n)$. To do so, note that there is a ring map (injective when $\bfk$ is totally real)
\[\FF_2\left[a_1,\ldots, a_n, \varepsilon\right]/I_n \to \K_2^{\bullet}(F_n)\]
where $I_n$ is the homogeneous ideal generated by the elements $a_j^2-\varepsilon a_j$, and the elements $\sigma_i(a_1,a_2,\ldots,a_1a_2 \ldots a_n)$ belong to the image of this map. More specifically, if we take the polynomial
\[p_n(a_1,\ldots,a_n)=\prod_{I \subset \lbrace 1, \ldots, n\rbrace}(1 + \sum_{j \in I} a_j),\]
then $\sigma_i(a_1,\ldots,a_1\ldots a_n)$ is the image of the component of degree $i$ of $p_n$.

We will compute specific coefficients of $\sigma_i$ by determining the corresponding coefficients of the polynomial $p_n$. This is achieved by analyzing the recursive relations satisfied by this family of polynomials and examining related auxiliary polynomials under the relations defined by the ideal~$I_n$.

Given indeterminate variables $x_0,x_1,\dots$, we defined the polynomial function 
\[
\Phi_n(x_0;x_1,\dots,x_n)\coloneqq\prod_{I\subset\{1,\dots,n\}}\left(x_0+\sum_{i\in I}x_i\right)\in \mathbb{Z}[x_0,x_1,\dots,x_n].
\]
Then, we have that the the aforementioned polynomial $p_n$ satisfies 
\[
p_n(x_1,\dots,x_n)= \Phi_n(1;x_1,\dots,x_n) \in \mathbb{Z}[x_1,\dots,x_n].
\]
Moreover, we define an auxiliary polynomial $q_n$ by
\[
q_n(x_1,\dots,x_n;x_{n+1})\coloneqq \Phi_n(1+x_{n+1};x_1,\dots,x_n) \in \mathbb{Z}[x_1,\dots,x_n,x_{n+1}].
\]
We compute properties of these polynomial by recursion. Let us remark that the polynomial function $\Phi_n$ satisfies the following recursive relation:
\[
    \Phi_n(x_0;x_1,\dots,x_n)=\Phi_{n-1}(x_0;x_1,\dots,x_{n-1})
    \cdot\Phi_{n-1}(x_0+x_n;x_1,\dots,x_{n-1}).
\]
\begin{table}[hb]
    \centering
    \begin{align*}
    p_0&=1 \\q_0(x_1)&=1+x_1\\
    p_1(x_1)&=1+x_1\\q_1(x_1;x_2)&= 1 + x_1 + 2x_2 + x_1 x_2 + x_2^2  \\  p_2(x_1,x_2)&=1+2x_1+2x_2+x_1^2+3x_1x_2+x_2^2+x_1^2x_2+x_1x_2^2\\
    q_2(x_1,x_2;x_3)&=1 + 2 x_2 + 2 x_1 +4 x_3 + x_1^2 + 3 x_1 x_2  + x_2^2 +6 x_3^2+ 6 x_1 x_3 
    \\&\phantom{=}+  6 x_2 x_3 + x_1^2 x_2  + x_1 x_2^2 + 6 x_1 x_2 x_3  + x_1^2 x_2 x_3 + 2 x_2^2 x_3  
    \\&\phantom{=}+ x_1 x_2^2 x_3 + 2 x_1^2 x_3 + 6 x_2 x_3^2 + 3 x_1 x_2 x_3^2 + 6 x_1 x_3^2  + x_2^2 x_3^2 + 
    \\&\phantom{=}x_1^2 x_3^2 + 4 x_3^3 + 2 x_2 x_3^3 + 2 x_1 x_3^3 +x_3^4 
    \end{align*}
    \caption{Examples of the initial values for the polynomials~$p_n$ and $q_n$.}
    \label{tab:exam}
\end{table}

First, we begin by establishing the following relation, which is satisfied by the polynomial function $\Phi$ modulo 2. In the following computations we will distinguish between the symbol $\equiv$ to denote mod $(2)$ equivalence and the symbol $=$ to denote equivalence with integral coefficients.
\begin{lm}  \label{lm:sum}
    The equality
    \[\Phi_{n}(y+z;x_1,\dots,x_n)\equiv \Phi_{n}(y;x_1,\dots,x_n)+\Phi_{n}(z;x_1,\dots,x_n)\]
    holds in $\mathbb{F}_2[x_1,\dots,x_n,y,z]$.
\end{lm}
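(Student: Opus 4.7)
The plan is to proceed by induction on $n$, exploiting the recursive relation
\[
\Phi_n(x_0; x_1, \ldots, x_n) = \Phi_{n-1}(x_0; x_1, \ldots, x_{n-1}) \cdot \Phi_{n-1}(x_0 + x_n; x_1, \ldots, x_{n-1})
\]
recorded just above the lemma. The base case $n = 0$ is immediate since $\Phi_0(x_0) = x_0$ is visibly $\mathbb{F}_2$-linear.

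For the inductive step, abbreviate $A(t) \coloneqq \Phi_{n-1}(t; x_1, \ldots, x_{n-1})$ and assume inductively that $A$ is $\mathbb{F}_2$-additive in $t$, so in particular $A(y+z+x_n) \equiv A(y) + A(z) + A(x_n) \pmod 2$. Applying the recursion and the inductive hypothesis, one gets
\[
\Phi_n(y+z; x_1,\ldots,x_n) \equiv \bigl(A(y)+A(z)\bigr)\bigl(A(y)+A(z)+A(x_n)\bigr) \pmod 2,
\]
which, after the cross term $2 A(y) A(z)$ is killed modulo $2$, equals $A(y)^2 + A(z)^2 + A(x_n)\bigl(A(y) + A(z)\bigr)$. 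On the other hand, expanding
\[
\Phi_n(y; x_1, \ldots, x_n) + \Phi_n(z; x_1, \ldots, x_n) = A(y) A(y+x_n) + A(z) A(z+x_n)
\]
and applying the inductive hypothesis to $A(y+x_n)$ and $A(z+x_n)$ gives the same expression, completing the induction.

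Conceptually, the identity is the statement that $\Phi_n(x_0; x_1, \ldots, x_n) \bmod 2$ is an additive (linearized) polynomial in $x_0$: indeed it factors as $\prod_{v \in V}(x_0 + v)$ where $V = \mathbb{F}_2\langle x_1, \ldots, x_n\rangle \subset \mathbb{F}_2[x_1, \ldots, x_n]$ is an $\mathbb{F}_2$-subspace (this is precisely the set $\{\sum_{i \in I} x_i\}_{I \subset \{1,\ldots,n\}}$ mod $2$), and the classical fact that $\prod_{v \in V}(x + v)$ is $\mathbb{F}_p$-linear in $x$ for any $\mathbb{F}_p$-subspace $V$ delivers the conclusion at once. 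There is no real obstacle: both routes reduce the lemma to a bookkeeping exercise in characteristic two, and the induction is probably cleanest for the paper since it reuses the recursion already at hand.
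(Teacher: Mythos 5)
Your proof is correct and follows essentially the same route as the paper: induction on $n$ using the recursion $\Phi_n(x_0)=\Phi_{n-1}(x_0)\cdot\Phi_{n-1}(x_0+x_n)$, with the cross terms killed modulo $2$. The closing remark identifying $\Phi_n(x_0;x_1,\dots,x_n)\bmod 2$ as the linearized polynomial $\prod_{v\in V}(x_0+v)$ for the $\mathbb{F}_2$-span $V$ of $x_1,\dots,x_n$ is a nice conceptual addition not present in the paper, but the core argument coincides.
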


\begin{proof}
    We proceed by induction on $n$. For $n=0$ the statement is trivial since $\Phi_0=\operatorname{Id}$. For $n=1$, we have that 
\begin{align*}
    \Phi_1(y+z;x_1)&=(y+z)(y+z+x_1)\\
    &=y^2+2yz+z^2+yx_1+zx_1\\
    &\equiv y(y+x_1)+z(z+x_1)\\
    &=\Phi_1(y;x_1)+\Phi(z;x_1).
\end{align*}
Let us assume the statement holds for integers less than $n$.
Writing $\Phi_{n-1}(x_0)$ for $\Phi_{n-1}(x_0;x_1,\dots,x_{n-1})$ we have that
\begin{align*}\displaystyle
    \Phi_n(y+z;x_1,\dots,x_n)
    &=\Phi_{n-1}(y+z)\cdot \Phi_{n-1}(y+z+x_n)
    \\&\equiv\left(\Phi_{n-1}(y)+\Phi_{n-1}(z)\right)
    \cdot\left(\Phi_{n-1}(y+x_n)+\Phi_{n-1}(z)\right)
    \\&=\Phi_{n-1}(y)\Phi_{n-1}(y+x_n)
    \\&\phantom{\equiv}+\Phi_{n-1}(z)
    \cdot\left(\Phi_{n-1}(y)+\Phi_{n-1}(y+x_n)+\Phi_{n-1}(z)\right)
    \\&\equiv\Phi_{n-1}(y)\Phi_{n-1}(y+x_n)+\Phi_{n-1}(z)\Phi_{n-1}(z+x_n)
    \\&=\Phi_{n}(y;x_1,\dots,x_n)+\Phi_{n}(z;x_1,\dots,x_n).
\end{align*}
\end{proof}

 Given a polynomial in several variables $f$, we denote by $\operatorname{pol}_i(f)$ the component of degree $i$ of $f$, i.e.\ the sum of summands of $f$ of degree $i$.

\begin{cor}\label{cr:coefpq}
    If $i<2^{n}$, then 
    \[
    \operatorname{pol}_i(p_n)\equiv \operatorname{pol}_i(q_n) \mod (2).
    \]
\end{cor}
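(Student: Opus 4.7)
The plan is to apply the additivity relation of Lemma~\ref{lm:sum} directly with $y=1$ and $z=x_{n+1}$. This gives
\[
q_n(x_1,\dots,x_n;x_{n+1})=\Phi_n(1+x_{n+1};x_1,\dots,x_n)\equiv \Phi_n(1;x_1,\dots,x_n)+\Phi_n(x_{n+1};x_1,\dots,x_n)
\]
modulo $2$, i.e.\ $q_n\equiv p_n+\Phi_n(x_{n+1};x_1,\dots,x_n)\pmod{2}$.

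Next I would observe that the correction term
\[
\Phi_n(x_{n+1};x_1,\dots,x_n)=\prod_{I\subset\{1,\dots,n\}}\Bigl(x_{n+1}+\sum_{i\in I}x_i\Bigr)
\]
is a product of $2^n$ linear forms in the variables $x_1,\dots,x_n,x_{n+1}$ (including the factor $x_{n+1}$ coming from $I=\emptyset$), and is therefore \emph{homogeneous} of degree $2^n$. Consequently $q_n-p_n$ is, modulo $2$, a homogeneous polynomial of degree $2^n$, so all of its components in degree $<2^n$ vanish mod $2$. Extracting the degree-$i$ part for $i<2^n$ yields
\[
\operatorname{pol}_i(p_n)\equiv\operatorname{pol}_i(q_n)\pmod{2},
\]
which is the claim.

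The only thing to check is that we may legitimately apply Lemma~\ref{lm:sum} with the specialization $y=1$, $z=x_{n+1}$: this is immediate, since the lemma is an identity in $\mathbb{F}_2[x_1,\dots,x_n,y,z]$ and the substitution is a ring homomorphism to $\mathbb{F}_2[x_1,\dots,x_n,x_{n+1}]$. There is no real obstacle in this corollary—the content was entirely concentrated in Lemma~\ref{lm:sum}, and the corollary is a bookkeeping consequence about the top-degree piece.
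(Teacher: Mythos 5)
Your proof is correct and is essentially identical to the paper's own argument: both apply \cref{lm:sum} with $y=1$, $z=x_{n+1}$ to get $q_n-p_n\equiv\Phi_n(x_{n+1};x_1,\dots,x_n)\pmod 2$ and then note that this difference is homogeneous of degree $2^n$, so all components in degree $i<2^n$ agree. No further comment is needed.
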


\begin{proof}
    By \cref{lm:sum}, we have that
\begin{align*}
    q_n-p_n&=\Phi_n(1+x_{n+1};x_1,\dots,x_n)-\Phi_n(1;x_1,\dots,x_n)\\
    &\equiv\Phi_n(x_{n+1};x_1,\dots,x_n)\mod (2),
\end{align*}
which is a homogeneous polynomial of degree $2^n$.
\end{proof}

By definition, we have that the constant term $\operatorname{pol}_0(p_n)=1$. The following proposition shows that the monomials of degree $i<2^{n-1}$ vanish $\mod (2)$ and provides a recursive formula for the first non-trivial coefficients.

\begin{prop}\label{pr:relations}
    Let $i$ be an integer such that $0<i<2^{n-1}$. Then, the following relations hold in $\FF_2[x_1,\dots,x_{n+1}]$
\begin{enumerate}
  \item $\operatorname{pol}_i(p_n)\equiv 0$,
    \item $\operatorname{pol}_{2^{n-1}}(p_n)\equiv
(\operatorname{pol}_{2^{n-2}}(p_{n-1}))^2+
\operatorname{pol}_{2^{n-1}}(q_{n-1})$.
\end{enumerate}
\end{prop}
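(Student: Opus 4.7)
The plan is to prove (1) and (2) together by induction on $n$, exploiting the recursive factorization
\[
p_n(x_1,\dots,x_n) = p_{n-1}(x_1,\dots,x_{n-1})\cdot q_{n-1}(x_1,\dots,x_{n-1};x_n),
\]
which follows immediately by setting $x_0=1$ in the identity $\Phi_n(x_0)=\Phi_{n-1}(x_0)\Phi_{n-1}(x_0+x_n)$ recalled before Lemma \ref{lm:sum}. Taking the component of degree $i$ on both sides yields the convolution
\[
\operatorname{pol}_i(p_n) \;=\; \sum_{j+k=i}\operatorname{pol}_j(p_{n-1})\cdot \operatorname{pol}_k(q_{n-1}).
\]
The strategy is to feed three inputs into this identity: the inductive hypothesis of (1) at level $n-1$, which forces $\operatorname{pol}_j(p_{n-1})\equiv 0 \bmod 2$ for $0<j<2^{n-2}$; Corollary \ref{cr:coefpq} at level $n-1$, which gives $\operatorname{pol}_k(q_{n-1})\equiv \operatorname{pol}_k(p_{n-1})\bmod 2$ for every $k<2^{n-1}$ (and hence the same vanishing range for $q_{n-1}$); and the elementary degree bound $\deg p_{n-1}\leq 2^{n-1}-1$, obtained by noting that the factor of $p_{n-1}$ indexed by $I=\emptyset$ is the constant $1$, so that $p_{n-1}$ is a product of only $2^{n-1}-1$ linear forms.

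With these inputs the analysis of the convolution is purely combinatorial. Fix $0<i\leq 2^{n-1}$ and consider a pair $(j,k)$ with $j+k=i$. If either coordinate lies in the interval $(0,2^{n-2})$ the summand vanishes mod $2$; if both coordinates satisfy $j,k\geq 2^{n-2}$ then $i\geq 2^{n-1}$, with equality forcing $j=k=2^{n-2}$; finally $j=2^{n-1}$ is excluded by the degree bound. For $0<i<2^{n-1}$ this leaves only the endpoints $(0,i)$ and $(i,0)$, each contributing $\operatorname{pol}_i(p_{n-1})\bmod 2$ (using the Corollary to rewrite $\operatorname{pol}_i(q_{n-1})$), and their sum $2\operatorname{pol}_i(p_{n-1})$ vanishes mod $2$, proving (1). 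For $i=2^{n-1}$ the same case analysis now admits the diagonal pair $(2^{n-2},2^{n-2})$, forbids $(2^{n-1},0)$ by the degree bound, and keeps $(0,2^{n-1})$; the surviving contributions are $\operatorname{pol}_{2^{n-1}}(q_{n-1})$ and $\operatorname{pol}_{2^{n-2}}(p_{n-1})\cdot\operatorname{pol}_{2^{n-2}}(q_{n-1})\equiv \operatorname{pol}_{2^{n-2}}(p_{n-1})^2 \bmod 2$, which is exactly the right-hand side of (2).

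There is no serious obstacle beyond this bookkeeping: the only delicate point is matching the vanishing range from the inductive hypothesis ($0<j<2^{n-2}$) with the threshold $i\leq 2^{n-1}$ so that exactly the dyadic endpoints and the diagonal midpoint survive. The base case $n=2$ can be read off directly from the table of examples: $\operatorname{pol}_1(p_2)=2(x_1+x_2)\equiv 0$ handles (1), while $\operatorname{pol}_2(p_2)\equiv x_1^2+x_1x_2+x_2^2$ and $\operatorname{pol}_1(p_1)^2+\operatorname{pol}_2(q_1)=x_1^2+(x_1x_2+x_2^2)$ handle (2). The second implication of the proposition then follows mechanically from the odd/even formula relating $\alpha_i(E_n)$ and $\alpha_i^{\rm SW}(E_n)$ stated just before the proposition, once one observes that the degree shift by $\lbrace 2\rbrace$ in the odd case contributes only to the next-to-top component, which is zero by (1).
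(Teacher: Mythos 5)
Your proof is correct and follows essentially the same route as the paper: induction on $n$ via the factorization $p_n=p_{n-1}\cdot q_{n-1}$, the convolution of graded components, \cref{cr:coefpq} to identify $\operatorname{pol}_k(q_{n-1})$ with $\operatorname{pol}_k(p_{n-1})$ in degrees below $2^{n-1}$, and the degree bound $\deg p_{n-1}\leq 2^{n-1}-1$. If anything, your case analysis of the surviving pairs $(j,k)$ — in particular the mutual cancellation of $(0,i)$ and $(i,0)$ for $2^{n-2}<i<2^{n-1}$, where the individual summands need not vanish — is spelled out more carefully than in the paper's own write-up.
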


\begin{proof}
    We proceed by induction on $n$. For $n=0,1$ the statement is vacuous. In \cref{tab:exam}, we can witness that the  polynomials $p_2$ and $q_2$ satisfy the relations in the statement. Let us assume the statement hold for integers less than $n$.
    
    Subsequently, let us remark that the polynomials $p_n$ satisfy the recursion relation: 
\begin{align*}
\displaystyle
p_n(x_1,\dots,x_n)
    &=\prod_{I\subset\{1,\dots,n\}}\left(1+\sum_{i\in I}x_i\right)\\
    &=\prod_{I\subset\{1,\dots,n-1\}}\left(1+\sum_{i\in I}x_i\right)\cdot
\prod_{I\subset\{1,\dots,n-1\}}\left(1+x_n+\sum_{i\in I}x_i\right)\\
    &=p_{n-1}(x_1,\dots,x_{n-1})\cdot q_{n-1}(x_1,\dots,x_{n-1};x_n).
\end{align*}
      Then, for any $i\in\mathbb{N}$:
\begin{align*}
    \displaystyle\operatorname{pol}_i(p_n)
    &=\operatorname{pol}_i(p_{n-1}\cdot q_{n-1})\\
    &=\sum_{j=0}^i \operatorname{pol}_j(p_{n-1})\operatorname{pol}_{i-j} (q_{n-1})\\
    &=\sum_{j=0}^i \operatorname{pol}_j(p_{n-1})\operatorname{pol}_{i-j} (q_{n-1}).
\end{align*}
We have that $\operatorname{pol}_j(p_{n-1}),\operatorname{pol}_{j-i}(q_{n-1})\equiv0\mod(2)$ for $0<i,j-i<2^{n-2}$ by induction hypothesis and by by \cref{cr:coefpq}. Thus, if~$0<i<2^{n-1}$, then the only non-vanishing couple of indices are $i,j\in\{0,2^{n-2}\}$. Hence, we have that $\operatorname{pol}_i(p_n)\equiv0\mod(2)$ for $i\neq2^{n-2}$, and that
\[
    \displaystyle\operatorname{pol}_{2^{n-2}}(p_n)
    \equiv \operatorname{pol}_{2^{n-2}}(p_{n-1})+\operatorname{pol}_{2^{n-2}} (q_{n-1}) \mod(2).
\]
However, by \cref{cr:coefpq}, 
$\operatorname{pol}_{2^{n-2}}(p_{n-1})\equiv\operatorname{pol}_{2^{n-2}} (q_{n-1}) \mod (2).$ Which yields the first relation. Lastly, since $\operatorname{pol}_0(p_{n-1})=\operatorname{pol}_0(q_{n-1})=1$, then
\begin{align*}
    \operatorname{pol}_{2^{n-1}}(p_n)&
    = \operatorname{pol}_{2^{n-1}}(p_{n-1})+
    \operatorname{pol}_{2^{n-2}}(p_{n-1})\operatorname{pol}_{2^{n-2}} (q_{n-1})
    +\operatorname{pol}_{2^{n-1}} (q_{n-1})
    \\&\equiv [\operatorname{pol}_{2^{n-2}}(p_{n-1})]^2
    +\operatorname{pol}_{2^{n-1}} (q_{n-1}) \mod (2),
\end{align*}
since $p_{n-1}$ has degree $2^{n-1}-1$ and 
$\operatorname{pol}_{2^{n-2}}(p_{n-1})\equiv\operatorname{pol}_{2^{n-2}} (q_{n-1})$ by \cref{cr:coefpq}.
\end{proof}

In order to utilize the second relation in \cref{pr:relations} to compute the coefficients of $p_n$, we first determine the polynomial part of the highest degree of $q_n$:
\begin{align}
    \operatorname{pol}_{2^{n}}(q_n)
    &=\operatorname{pol}_{2^{n}}\left(\prod_{I\subset\{1,\dots,n\}}\left(1+x_{n+1}+\sum_{i\in I}x_i\right)\right)\label{eq:coefqn}\\ \nonumber
    &=\prod_{I\subset\{1,\dots,n\}}\operatorname{pol}_{1}\left(1+x_{n+1}+\sum_{i\in I}x_i\right)\\ \nonumber
    &=\prod_{I\subset\{1,\dots,n\}}\left(x_{n+1}+\sum_{i\in I}x_i\right)\\ \nonumber
    &=\Phi_n\left(x_{n+1};x_1,\dots,x_n\right).
\end{align}
Subsequently, this formulation is applied to the relation provided in \cref{pr:relations}. By iterating the process, we derive the following formula:

\begin{align}\label{eq:pnrel}
    \operatorname{pol}_{2^{n-1}}(p_n)&\equiv
    (\operatorname{pol}_{2^{n-2}}(p_{n-1}))^2+\operatorname{pol}_{2^{n-1}}(q_{n-1})\\\nonumber
    &\equiv
    \left((\operatorname{pol}_{2^{n-3}}(p_{n-2}))^2+\operatorname{pol}_{2^{n-2}}(q_{n-2}) \right)^2
    +\operatorname{pol}_{2^{n-1}}(q_{n-1})\\\nonumber
    &\equiv
    (\operatorname{pol}_{2^{n-3}}(p_{n-2}))^4+(\operatorname{pol}_{2^{n-2}}(q_{n-2}))^2
    +\operatorname{pol}_{2^{n-1}}(q_{n-1})
    \\\nonumber
    &\equiv \displaystyle\sum_{i=1}^n (\operatorname{pol}_{2^{n-i}}(q_{n-i}))^{2^{i-1}}\\\nonumber
    &\equiv \displaystyle\sum_{i=0}^{n-1} (\operatorname{pol}_{2^{i}}(q_{i}))^{2^{(n-1)-i}}\\\nonumber
    &\mkern-4mu\overset{\eqref{eq:coefqn}}{\equiv} \displaystyle\sum_{i=0}^{n-1} \Phi_i(x_{i+1};x_1,\dots,x_i)^{2^{(n-1)-i}}.
\end{align}
To compute the image of the coefficient in $\mathbb{F}_2[x_1\dots,x_n,\varepsilon]/I_n$, where as above, the ideal $I_n$ is generated by $x_i^2-\varepsilon x_i$, for $i=1,\dots,n$.
We evaluate the image of each summand individually:
\begin{align}\label{eq:PhiIn}
    &\Phi_i(x_{i+1};x_1,\dots,x_i)^{2^{(n-1)-i}}\\\nonumber
    & \equiv \Phi_i(x_{i+1}^{2^{(n-1)-i}};x_1^{2^{(n-1)-i}},\dots,x_i^{2^{(n-1)-i}})\\\nonumber
    & \equiv \Phi_i(\varepsilon^{2^{(n-1)-i}-1} x_{i+1};\varepsilon^{2^{(n-1)-i}-1} x_1,\dots,\varepsilon^{2^{(n-1)-i}-1} x_i)\\\nonumber
    & \equiv \left(\varepsilon^{2^{(n-1)-i}-1} \right)^{2^i} \Phi_i( x_{i+1};x_1,\dots,x_i)\\\nonumber
    & \equiv \varepsilon^{2^{(n-1)}-2^i} \Phi_i( x_{i+1};x_1,\dots,x_i).
\end{align}
Similarly, we compute in advance the image of the factors under the relations defined by the ideal $I_n$. Let $I\subset \{1,\dots,n-1\}$, then
\begin{align}
    &\left(x_n+x_{n+1}+\sum_{i\in I}x_i\right)
    \left(x_{n+1}+\sum_{i\in I}x_i\right) \label{eq:twofacts}\\ \nonumber
    &=x_n\left(x_{n+1}+\sum_{i\in I}x_i\right)
    +\left(x_{n+1}+\sum_{i\in I}x_i\right)^2\\\nonumber
    &\equiv x_n\left(x_{n+1}+\sum_{i\in I}x_i\right)
    +x_{n+1}^2+\sum_{i\in I}x_i^2\\ \nonumber
    &\equiv x_n\left(x_{n+1}+\sum_{i\in I}x_i\right)
    +\varepsilon x_{n+1}+\varepsilon\sum_{i\in I}x_i\\ \nonumber
    &\equiv (x_n+\varepsilon)\left(x_{n+1}+\sum_{i\in I}x_i\right).
\end{align}
Applying this formula to the recursive relation of the function $\Phi$ results in the following expression:

\begin{align}\label{eq:Phirel}
    &\Phi_n(x_{n+1};x_1,\dots,x_n)\\\nonumber
    &=\Phi_{n-1}(x_n+x_{n+1};x_1,\dots,x_{n-1})\cdot \Phi_{n-1}(x_{n+1};x_1,\dots,x_{n-1})\\\nonumber
    &=\prod_{I\subset\{1,\dots,n-1\}}\left(x_n+x_{n+1}+\sum_{i\in I}x_i\right)
    \left(x_{n+1}+\sum_{i\in I}x_i\right)\\\nonumber
    &\mkern-4mu\overset{\eqref{eq:twofacts}}{\equiv} \prod_{I\subset\{1,\dots,n-1\}} (x_n+\varepsilon)\left(x_{n+1}+\sum_{i\in I}x_i\right)\\\nonumber
    &=(x_n+\varepsilon)^{2^{n-1}}\prod_{I\subset\{1,\dots,n-1\}} \left(x_{n+1}+\sum_{i\in I}x_i\right)\\\nonumber
    &=(x_n+\varepsilon)^{2^{n-1}}\Phi_{n-1}(x_{n+1};x_1,\dots,x_{n-1})\\\nonumber
    &=\prod_{i=1}^{n} (x_i+\varepsilon)^{2^{i-1}}\cdot\Phi_{0}(x_{n+1})\\\nonumber
    &=\prod_{i=1}^{n} (x_i+\varepsilon)^{2^{i-1}}\cdot(x_{n+1}).
\end{align}
We compute the image of the first factor under the relations defined by the ideal~$I_n$:
\begin{align}\label{eq:xipluse}
    \displaystyle \prod_{i=1}^{n} (x_i+\varepsilon)^{2^{i-1}}&\equiv \prod_{i=1}^{n} (x_i^{2^{i-1}}+\varepsilon^{2^{i-1}})\\\nonumber
    &\equiv \prod_{i=1}^{n} (\varepsilon^{2^{i-1}-1} x_i+\varepsilon^{2^{i-1}})\\\nonumber
    &\equiv \prod_{i=1}^{n} \varepsilon^{2^{i-1}-1}( x_i+\varepsilon)\\\nonumber
    &= \varepsilon^{2^{n}-1-n} \prod_{i=1}^{n} ( x_i+\varepsilon).
\end{align}
Lastly, we have that the polynomial part of degree $2^{n-1}$ of $p_n$ is then given by:
\begin{align}\label{eq:coeffofpn}
    \operatorname{pol}_{2^{n-1}}(p_n)
    &\mkern-4mu\overset{\eqref{eq:pnrel}}{\equiv} \displaystyle\sum_{i=0}^{n-1} \Phi_i(x_{i+1};x_1,\dots,x_i)^{2^{(n-1)-i}}\\\nonumber
    &\mkern-4mu\overset{\eqref{eq:PhiIn}}{\equiv} \displaystyle\sum_{i=0}^{n-1} \varepsilon^{2^{(n-1)}-2^i} \Phi_i( x_{i+1};x_1,\dots,x_i)\\\nonumber
    &\mkern-4mu\overset{\eqref{eq:Phirel}}{\equiv} \displaystyle\sum_{i=0}^{n-1} \varepsilon^{2^{(n-1)}-2^i} \prod_{j=1}^{i} (x_j+\varepsilon)^{2^{j-1}}\cdot(x_{i+1})\\\nonumber
    &\mkern-4mu\overset{\eqref{eq:xipluse}}{\equiv} \displaystyle\sum_{i=0}^{n-1} \varepsilon^{2^{(n-1)}-2^i} \varepsilon^{2^{i}-1-i} \prod_{j=1}^{i} (x_j+\varepsilon)\cdot(x_{i+1})\\\nonumber
    &\equiv \displaystyle\sum_{i=0}^{n-1} \varepsilon^{2^{(n-1)}-1-i} \prod_{j=1}^{i} (x_j+\varepsilon)\cdot(x_{i+1})\\\nonumber
    &\equiv \displaystyle\sum_{i=1}^{n} \varepsilon^{2^{(n-1)}-i} s_{i}({x_1,\dots,x_n}),
\end{align}
where $s_i$ is the $i$-th elementary symmetric polynomial in $x_1,\dots,x_n$ seen as element in $\mathbb{F}_2[x_1\dots,x_n,\varepsilon]/I_n$. This yields \cref{prop:sigmastate}.

\subsection{Cohomological invariants from theta characteristics}

We are ready to compute the Galois-Stiefel-Whitney classes of theta characteristics on our test curve.

\begin{prop}
We have 

\[
\alpha_i(S^{-}_{C_g})=\begin{cases}
    1        &i=0, \\
    0        &0<i< 2^{g-2}, \\
\varepsilon^{2^{g-2}-g+1}(\sum_{i=1}^g \prod_{j \neq i} \lbrace a_j \rbrace) + * \quad                  &i=2^{g-2},
\end{cases} 
\]
and
\[
\alpha_i(S_{C_g})=\begin{cases}
1 & i=0, \\
0 & 0<i< 2^{g-1}, \\
\varepsilon^{2^{g-1}-g}\prod_{i=1}^g \lbrace a_j \rbrace + * & i=2^{g-1},
\end{cases}
\]
where $\varepsilon$ appears at a strictly higher power in the $*$ summands than on the summands on the left.

\end{prop}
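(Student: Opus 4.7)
The plan is to combine the multiplicativity of the total Galois-Stiefel-Whitney class with the explicit formula for $\alpha_{\rm tot}(E_n)$ provided by Prop.~\ref{prop:sigmastate}. By the two preceding propositions,
\[\alpha_{\rm tot}(S^{-}_C) = \prod_A \alpha_{\rm tot}(E_A)^{n^{-}_A}, \qquad \alpha_{\rm tot}(S_C) = \prod_A \alpha_{\rm tot}(E_A)^{n^{+}_A+n^{-}_A},\]
and for $|A|=r$ each factor has the shape $\alpha_{\rm tot}(E_A) = 1 + h_A$ with $h_A$ a homogeneous element of degree $2^{r-1}$ whose leading (lowest $\varepsilon$-power) summand is $\varepsilon^{2^{r-1}-r} \prod_{j \in A} \lbrace a_j \rbrace$.

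The second step is to raise each factor to the prescribed power of $2$. The multiplicities $n^-_A$ and $n^+_A + n^-_A$ are powers of $2$ whenever they exceed $1$, so the Frobenius identity in mod-$2$ Milnor K-theory yields $(1+h_A)^{2^s} = 1 + h_A^{2^s}$, while the relation $\lbrace a \rbrace^m = \varepsilon^{m-1}\lbrace a \rbrace$ (immediate from $\lbrace a,a\rbrace = \varepsilon\lbrace a\rbrace$) lets us collect the resulting $\varepsilon$-powers. A direct exponent count based on the arithmetic identity $(2^{r-1}-r)\cdot 2^s + r(2^s-1) = 2^{r-1+s}-r$ will show that, for $|A|=r<g$, the factor $\alpha_{\rm tot}(E_A)^{n^{-}_A}$ (respectively $\alpha_{\rm tot}(E_A)^{n^{+}_A+n^{-}_A}$) equals $1$ plus a homogeneous class of degree exactly $2^{g-2}$ (resp.\ $2^{g-1}$), with leading summand $\varepsilon^{2^{g-2}-r}\prod_{j\in A}\lbrace a_j\rbrace$ (resp.\ $\varepsilon^{2^{g-1}-r}\prod_{j\in A}\lbrace a_j\rbrace$). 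For $S_C$ there is an extra factor from $|A|=g$, contributing $1 + \varepsilon^{2^{g-1}-g}\lbrace a_1,\ldots,a_g\rbrace + (\ldots)$, again in degree $2^{g-1}$.

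The final step is to assemble the product. Since every factor is $1$ plus a class concentrated in degree $2^{g-2}$ (resp.\ $2^{g-1}$), the product has vanishing components in all intermediate degrees $0<i<2^{g-2}$ (resp.\ $0<i<2^{g-1}$), and in the critical degree only the linear part of the expansion contributes — all cross-products $H_A H_B$ land in degree $\geq 2^{g-1}$ (resp.\ $\geq 2^g$). Thus
\[\alpha_{2^{g-2}}(S^-_C) = \sum_{r=1}^{g-1} \varepsilon^{2^{g-2}-r}\sum_{|A|=r}\prod_{j\in A}\lbrace a_j\rbrace + (\text{higher }\varepsilon\text{-powers}),\]
and analogously for $\alpha_{2^{g-1}}(S_C)$. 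The minimum $\varepsilon$-exponent in the first sum is attained uniquely at $r=g-1$, yielding $\varepsilon^{2^{g-2}-g+1}\sum_i\prod_{j\neq i}\lbrace a_j\rbrace$; for $S_C$ the comparison $2^{g-1}-r>2^{g-1}-g$ for all $r<g$ shows that the minimum is uniquely attained at $|A|=g$, yielding $\varepsilon^{2^{g-1}-g}\lbrace a_1,\ldots,a_g\rbrace$.

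The main obstacle will be the $\varepsilon$-exponent bookkeeping: one must verify the arithmetic identity that controls the leading exponent in each factor and then confirm that this exponent is strictly minimized at the claimed value of $|A|$, so that every remaining summand — including the higher-$\varepsilon$-power tails hidden in the $(\ldots)$ of Prop.~\ref{prop:sigmastate} — contributes at a strictly higher $\varepsilon$-power than the stated leading term.
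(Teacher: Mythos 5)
Your proposal is correct and follows essentially the same route as the paper: expand $\alpha_{\rm tot}$ multiplicatively over the fields $E_A$ with multiplicities $n^{\pm}_A$, use that each factor is $1$ plus a single homogeneous class of degree $2^{|A|-1}$ (Prop.~\ref{prop:sigmastate}), apply the mod-$2$ Frobenius to the power-of-two multiplicities, and track the $\varepsilon$-exponents via $\lbrace a\rbrace^m=\varepsilon^{m-1}\lbrace a\rbrace$. The only cosmetic difference is that you treat $S_{C_g}$ directly as $\prod_A\alpha_{\rm tot}(E_A)^{n^+_A+n^-_A}$, whereas the paper writes it as $\alpha_{\rm tot}(S^-_{C_g})^2\,\alpha_{\rm tot}(E_{\lbrace 1,\ldots,g\rbrace})$; these are equivalent.
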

\begin{proof}
We begin with the odd theta characteristics. We have
\[
\alpha_{\rm tot}(S^{-}_{C_g}) = \prod_{I \subseteq \lbrace 1, \ldots, g \rbrace} \alpha_{\rm tot}(E_I)^{n^{-}_I}=\prod_{I \subseteq \lbrace 1, \ldots, g \rbrace, \mid I \mid < g} \alpha_{\rm tot}(E_I)^{2^{g-\mid I \mid}}.
\]
Now note that 
\[\alpha_{\rm tot}(E_I)^{2^{g-\mid I \mid}-1} = (1+\alpha_{2^{\mid I \mid - 1}}(E_{I}))^{2^{g-\mid I \mid}-1} = 1+\alpha_{2^{\mid I \mid - 1}}(E_I)^{2^{g-\mid I \mid -1}}.\]
which shows that the first nontrivial term has degree $2^{\mid I \mid - 1}\cdot 2^{g-\mid I \mid -1} = 2^{g-2}$, which in turn shows that the term of degree $2^{g-2}$ of $\alpha_{\rm tot}(S_{C_g}^-)$ is
\[\alpha_{\rm tot}(S^{-}_{C_g})=\sum_{I \subseteq \lbrace 1, \ldots, g \rbrace, \mid I \mid < g} \alpha_{2^{\mid I \mid - 1}}(E_I)^{2^{g-\mid I \mid-1}} = \sum_{I \subseteq \lbrace 1, \ldots, g \rbrace, \mid I \mid < g} \varepsilon^{2^{g}-2-\mid I \mid}\prod_{i \in I}\lbrace a_i \rbrace .\]
The formula for all characteristic classes follows immediately from the fact that
\[\alpha_{\rm tot}(S^+_{C_g})=\alpha_{\rm tot}(S^-_{C_g})\alpha_{\rm tot}(E_{\lbrace 1, \ldots, g \rbrace}),\]
which shows that 
\[\alpha_{\rm tot}(S_{C_g})=\alpha_{\rm tot}(S^-_{C_g})\cdot \alpha_{\rm tot}(S^+_{C_g})=\alpha_{\rm tot}(S^-_{C_g})^2\alpha_{\rm tot}(E_{\lbrace 1, \ldots, g \rbrace}).\]
\end{proof}

We can finally gather our results together to prove Thm. \ref{thm:classes}, and in fact a bit more. Recall that as ${\rm Gal}(\mathbb{C}/\mathbb{R})=\ZZ/2\ZZ$, we have
\[{\rm H}^{\bullet}(\mathbb{R},\ZZ/2\ZZ)={\rm H}^{\bullet}_{\rm grp}(\ZZ/2\ZZ,\ZZ/2\ZZ)\simeq\FF_2\left[ t \right].\]
Thus, since ${\rm H}^1(\mathbb{R},\ZZ/2\ZZ)=\mathbb{R}^*/(\mathbb{R}^*)^2$ is generated by $\varepsilon = \lbrace -1\rbrace$, we conclude that 
\[\K_2^{\bullet}(\mathbb{R})=\H_{\ZZ/2\ZZ}^{\bullet}(\mathbb{R})=\FF_2\left[ \varepsilon \right],\]
an integral domain.

\begin{thm}\label{thm: main proof B}
Let $\bfk=\mathbb{R}$ be the field of real numbers, and let $\Mcal_g, \Mcal_g^{\rm ct}$ and $\mathcal{A}_g$ be respectively the moduli stacks of smooth genus $g$ curves, compact type genus $g$ curves and principally polarized abelian varieties of dimension $g$ over $\mathbb{R}$.

Let $\alpha_i(\mathcal{S}^-_g), \alpha_i(\mathcal{S}^+_g), \alpha_i(\mathcal{S}_g)$ be the Galois-Stiefel-Whitney classes induced respectively by odd, even and all theta characteristics. These belong to ${\rm Inv}^i(\Mcal_g^{\rm ct}, \K_2)$ (and to $\H^i(\Mcal_g^{\rm ct},\ZZ/2\ZZ)$), and by restriction to ${\rm Inv}^i(\Mcal_g)$ (resp. $\H^i(\Mcal_g,\ZZ/2\ZZ)$) and ${\rm Inv}^i(\Hcal_g,\K_2)$ (resp. $\H^i(\Hcal_g,\ZZ/2\ZZ)$). Finally let $\alpha_i((\Xcal_g)_2)$ be the Galois Stiefel Whitney class defined by the two-torsion subgroup of the universal abelian variety in ${\rm Inv}^i(\mathcal{A}_g, \K_2)$ and $\H^i(\mathcal{A}_g,\ZZ/2\ZZ)$. Then:

\begin{enumerate}
    \item The classes $1, \alpha_{2^{g-2}}(\mathcal{S}^-_g)$ and $\alpha_{2^{g-1}}(\mathcal{S}_g)$ are $\K_2^{\bullet}(\mathbb{R})$-linearly independent.
    \item The classes $1, \alpha_{\rm tot}(\mathcal{S}^-_g)$ and $\alpha_{\rm tot}(\mathcal{S}_g)$ are $\K_2^{\bullet}(\mathbb{R})$-linearly independent.
    \item The classes $1, \alpha_{\rm tot}(\mathcal{S}^-_g)$ and $\alpha_{\rm tot}(\mathcal{S}^+_g)$ are $\K_2^{\bullet}(\mathbb{R})$-linearly independent.
    \item The classes $1$ and $\alpha_{2^{g-1}}((\Xcal_g)_2)$ are $\K_2^{\bullet}(\mathbb{R})$-linearly independent.
    \item In particular, all the classes above, seen as classes in \'etale cohomology\footnote{Here we see $\alpha_{\rm tot}$ as a finite sum as the classes $\alpha_i$ will eventually be zero as cohomological invariants.}, are nontrivial and do not belong to the $\H^{\bullet}(\mathbb{R},\ZZ/2\ZZ)$-submodule generated by the image of the cycle map.
\end{enumerate}
\end{thm}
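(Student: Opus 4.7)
The plan is to test any claimed linear dependence against the pullback to the test curve $C_g$ constructed above, reducing all five statements to the linear independence of certain explicit elements in $\K_2^{\bullet}(F)$, where $F=\mathbb{R}(a_1,\ldots,a_g)$. Since $\mathbb{R}$ is totally real, the ring map $\FF_2[a_1,\ldots,a_g,\varepsilon]/I_g\hookrightarrow\K_2^{\bullet}(F)$ is injective and makes the source a free $\FF_2[\varepsilon]$-module on the basis $\{a_J:J\subseteq\{1,\ldots,g\}\}$. I denote the $a_J$-component projection by $\pi_J$. Because $\FF_2[\varepsilon]$ is a (graded) integral domain, it is enough to exhibit, in each case, a sequence of projections $\pi_J$ producing a triangular $\FF_2[\varepsilon]$-system with nonzero (hence nonzerodivisor) diagonal.

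For part (1) the preceding proposition supplies this triangle at once: applying $\pi_{\{1,\ldots,g\}}$ kills the first two summands (whose monomials have $a$-degree $\leq g-1$) and leaves $c_2\varepsilon^{2^{g-1}-g}=0$, hence $c_2=0$; then $\pi_{\{1,\ldots,g\}\setminus\{1\}}$ forces $c_1\varepsilon^{2^{g-2}-g+1}=0$, hence $c_1=0$; finally $\pi_\emptyset$ gives $c_0=0$. Part (4) then follows immediately because $C_g$ carries the $F$-rational theta characteristic $(g-1)q_{2g+2}$, so $\mathrm{Jac}(C_g)_2=\Spec(S_{C_g})$ and $\alpha_{2^{g-1}}((\Xcal_g)_2)|_{C_g}=\alpha_{2^{g-1}}(\Scal_g)|_{C_g}$; the single projection $\pi_{\{1,\ldots,g\}}$ is enough.

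For parts (2) and (3) the total classes are inhomogeneous, and the argument requires tracking leading $\varepsilon$-powers rather than exact equalities. The three inputs I would establish are: (i) $\pi_\emptyset(\alpha_{\rm tot}(\Scal^-_g)|_{C_g})=1$, because each factor $\alpha_{\rm tot}(E_I)^{n^-_I}$ has trivial pure-$\varepsilon$ part (every monomial involves some $a_i$) and $\pi_\emptyset$ is multiplicative; (ii) $\pi_{\{1,\ldots,g\}}(\alpha_{\rm tot}(\Scal^-_g)|_{C_g})=0$, because $n^-_{\{1,\ldots,g\}}=0$ and each $\alpha_{\rm tot}(E_I)$ with $|I|<g$ lies in the subring generated by $\{a_i:i\in I\}$; (iii) the $a_J$-coefficient $R_J$ of $\alpha_{\rm tot}(\Scal^-_g)|_{C_g}$ for $|J|\geq 1$ has minimal $\varepsilon$-power at least $2^{g-2}-|J|$. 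Combining (i)--(iii) with the identities $\alpha_{\rm tot}(\Scal^+_g)=\alpha_{\rm tot}(\Scal^-_g)\cdot\alpha_{\rm tot}(E_g)$ and $\alpha_{\rm tot}(\Scal_g)=\alpha_{\rm tot}(\Scal^-_g)^2\cdot\alpha_{\rm tot}(E_g)$ and the leading $a_{\{1,\ldots,g\}}$-coefficient $\varepsilon^{2^{g-1}-g}$ of $\alpha_{\rm tot}(E_g)-1$, a direct $\varepsilon$-power count shows that $\pi_{\{1,\ldots,g\}}$ of either $\alpha_{\rm tot}(\Scal_g)|_{C_g}-1$ or $\alpha_{\rm tot}(\Scal^+_g)|_{C_g}-1$ equals $\varepsilon^{2^{g-1}-g}$ plus strictly higher $\varepsilon$-powers; the same three-step elimination $\pi_{\{1,\ldots,g\}}\to\pi_{\{1,\ldots,g\}\setminus\{1\}}\to\pi_\emptyset$ concludes. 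The main bookkeeping work is exactly in step (iii) and the leading-term verification: cross terms $R_J Q_{J'}\varepsilon^{|J\cap J'|}$ with $J\neq\emptyset$ contribute $\varepsilon$-power at least $2^{g-2}+2^{g-1}-g>2^{g-1}-g$, so only $R_\emptyset\cdot Q_{\{1,\ldots,g\}}=\varepsilon^{2^{g-1}-g}$ survives at the leading order.

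Finally, part (5) is a formal consequence of Proposition \ref{prop: negligible}: any relation $\sum c_i x_i = \mathrm{cl}(z)$ (mod constants) in $\H^{\bullet}$ sheafifies to $\sum c_i \widetilde{x}_i=0$ in $\Inv$, since cycles are negligible, contradicting the $\FF_2[\varepsilon]$-linear independence established in (1)--(4). The independence statements on $\Mcal_g$ and $\Hcal_g$ come for free: since $C_g$ is hyperelliptic, it factors through $\Hcal_g\hookrightarrow\Mcal_g\hookrightarrow\Mcal_g^{\rm ct}$, so any linear relation on the larger stacks restricts to a linear relation on $C_g$, which we have just ruled out.
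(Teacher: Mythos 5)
Your treatment of parts (1), (4) and (5) is correct and is essentially the paper's argument in different clothing: where you project onto the $a_J$-components of the free $\FF_2[\varepsilon]$-module $\FF_2[a_1,\dots,a_g,\varepsilon]/I_g$, the paper applies the iterated residues $\partial_{a_g=0}\circ\cdots\circ\partial_{a_1=0}$, which extract the same coefficients but are defined on all of $\K_2^{\bullet}(F)$ and so do not require invoking the injectivity of the polynomial model (which the paper asserts for totally real $\bfk$ but does not prove). The triangular elimination for (1), the identification $\alpha_{2^{g-1}}((\Xcal_g)_2)|_{C_g}=\alpha_{2^{g-1}}(S_{C_g})$ for (4), and the use of Proposition \ref{prop: negligible} for (5) all match the paper.

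For parts (2) and (3), however, there is a genuine gap. Your claim (ii), that $\pi_{\{1,\dots,g\}}\bigl(\alpha_{\rm tot}(\Scal^-_g)|_{C_g}\bigr)=0$, is false: the justification only shows that each \emph{factor} $\alpha_{\rm tot}(E_I)$ with $|I|<g$ lies in the subring generated by $\{a_i:i\in I\}$, but the product of such factors over varying $I$ certainly produces $a_{\{1,\dots,g\}}$-terms. Concretely, for $g=3$ one has $\alpha_{\rm tot}(S^-_C)=\prod_i(1+\varepsilon\lbrace a_i\rbrace)\prod_{i<j}(1+\lbrace a_i,a_j\rbrace+\varepsilon\lbrace a_i\rbrace+\varepsilon\lbrace a_j\rbrace)$, and setting $\varepsilon=1$ (summing all coefficients) this becomes $\prod_i(1+\lbrace a_i\rbrace)$, whose $\lbrace a_1,a_2,a_3\rbrace$-coefficient is $1$; hence $\pi_{\{1,2,3\}}(\alpha_{\rm tot}(S^-_C))$ is a nonzero element of $\FF_2[\varepsilon]$. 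This also invalidates the closing cross-term estimate: the term $R_{\{1,\dots,g\}}\cdot Q_\emptyset=R_{\{1,\dots,g\}}$ is only bounded below by $\varepsilon^{2^{g-2}-g}$ by your step (iii), which is \emph{smaller} than the target order $2^{g-1}-g$, so a single application of $\pi_{\{1,\dots,g\}}$ does not isolate the coefficient of $\alpha_{\rm tot}(\Scal_g)$ or $\alpha_{\rm tot}(\Scal^+_g)$. The paper avoids this entirely by exploiting homogeneity: since $\alpha_{\rm tot}(\Scal^-_g)=1+(\text{terms of degree}\geq 2^{g-2})$ and $\alpha_{\rm tot}(\Scal_g)=1+(\text{terms of degree}\geq 2^{g-1})$, one decomposes a putative relation with coefficients in $\FF_2[\varepsilon]$ into its homogeneous graded pieces and, by induction on the degree, reduces to the homogeneous statement (1) (for (3), after first multiplying the relation by $\alpha_{\rm tot}(\Scal^-_g)$ and using $\alpha_{\rm tot}(\Scal_g)=\alpha_{\rm tot}(\Scal^-_g)\alpha_{\rm tot}(\Scal^+_g)$). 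You should replace your leading-$\varepsilon$-power bookkeeping for (2) and (3) with this degree-by-degree reduction; as written, the argument does not close.
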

\begin{proof}
To prove point (1), assume we have an equality
\[x_0 + x_1 \cdot \alpha_{2^{g-2}}(\mathcal{S}^-_g) + x_2 \cdot \alpha_{2^{g-1}}(\mathcal{S}_g) = 0 \in \Inv(\Mcal_g^{\rm ct}, \K_2),\]
with $x_0, x_1, x_2 \in \K_2^\bullet(\mathbb{R})$. Pulling it back trough the morphism $C_g:\Spec(F) \to \Mcal_g^{\rm ct}$ we obtain the equality
\[x_0 + x_1 \cdot \alpha_{2^{g-2}}(S^-_{C_g}) + x_2 \cdot \alpha_{2^{g-1}}(S_{C_g}) = 0 \in \K_2^{\bullet}(F).\]
Now we apply the sequence of ramification maps ${\rm P}=\partial_{a_g=0}\circ \ldots \circ\partial_{a_1=0}$ to the equation. Clearly 
\[{\rm P}(x_0)={\rm P}(x_1 \cdot \alpha_{2^{g-2}}(S^-_{C_g}))=x_1 \cdot P(\alpha_{2^{g-2}}(S^-_{C_g}))=0.\]
As the each component of $\alpha_{2^{g-2}}(S^-_{C_g}))$ contains at most $g-1$ elements out of $a_1,\ldots, a_g$. On the other hand
\[{\rm P}(x_2 \cdot \alpha_{2^{g-1}}(\mathcal{S}_g))=x_2 \cdot {\rm P}(\alpha_{2^{g-1}}(\mathcal{S}_g))=x_2 \cdot \varepsilon^{2^{g-1}-g}.\]
As multiplying by $\varepsilon$ is injective in $\K_2(\mathbb{R})$ we conclude that $x_2$ must be zero. This also immediately implies that $x_0,x_1$ are zero as $\alpha_{2^{g-2}}(\mathcal{S}^-_g)$ does not belong to $\K_2^{\bullet}(\mathbb{R})$.

Point (2) is a direct consequence of point (1) as if we had an equality
\[y_0 + y_1 \cdot \alpha_{\rm tot}(\mathcal{S}^-_g) + y_2 \cdot \alpha_{\rm tot}(\mathcal{S}_g) = 0 \in \Inv(\Mcal_g^{\rm ct}, \K_2),\]
then restricting the elements to $C_g$ and taking the lowest degree components we would get again a relation between some multiple of $1$, $\alpha_{2^{g-2}}(S^-_{C_g}))$ and $\alpha_{2^{g-1}}(S_{C_g}))$.

To prove point (3), assume that
\[x_0 + x_1 \cdot \alpha_{\rm tot}(\mathcal{S}^-_g) + x_2 \cdot \alpha_{\rm tot}(\mathcal{S}^+_g) = 0 \in \Inv(\Mcal_g^{\rm ct}, \K_2).\]
Then as $\alpha_{\rm tot}(\mathcal{S}_g)=\alpha_{\rm tot}(\mathcal{S}^-_g)\alpha_{\rm tot}(\mathcal{S}^+_g)$ we should have
\[x_2 \cdot \alpha_{\rm tot}(\mathcal{S}_g)=\alpha_{\rm tot}(\mathcal{S}^-_g)(x_0 + x_1\cdot \alpha_{\rm tot}(\mathcal{S}^-_g))=x_0\cdot \alpha_{\rm tot}(\mathcal{S}_g^{-})+ x_1 \cdot \alpha_{\rm tot}(\mathcal{S}_g^{-})^2.\]
Now write $x_i'$ for the lowest degree component of $x_i$: the left hand side's lowest degree term is $x'_2 \cdot \alpha_{2^{g-1}}(\mathcal{S}_g)$, while the lowest degree term of the right hand side is of the form 
\[x'_0 \cdot \alpha_{2^{g-2}}(S^-_{C_g})+x'_1 \cdot \alpha_{2^{g-2}}(S^-_{C_g})^2=(x'_0+x'_1\cdot \varepsilon^{2^{g-2}})\alpha_{2^{g-2}}(S^-_{C_g})\]
and by point (1) these cannot be equal.

Point (3) is immediate as ${\rm Jac}(C_g)_2 = S_{C_g}$.

Point (4) follows from Prop. \ref{prop: negligible}, as point (1) shows that each of these classes induces a nontrivial cohomological invariant.
\end{proof}

When $\bfk$ is a totally real field (or a purely transcendental extension of one) Thm.~\ref{thm: main proof B} gives us the following obvious corollary:

\begin{cor}
Let $\bfk$ be a totally real field of a purely transcendental extension of one. Then
\begin{enumerate}
    \item The $\K_2^{\bullet}(\bfk)$-submodule of $\Inv(\Mcal_g^{\rm ct},\K_2)$ generated by the elements 
    \[1, \alpha_{2^{g-2}}(\mathcal{S}^-_g), \text{ and }\alpha_{2^{g-1}}(\mathcal{S}_g)\] has rank $3$.
    \item The $\K_2^{\bullet}(\bfk)$-submodule of $\Inv(\Mcal_g^{\rm ct},\K_2)$ generated by the elements  
    \[1, \alpha_{\rm tot}(\mathcal{S}^-_g), \text{ and } \alpha_{\rm tot}(\mathcal{S}_g)\] has rank $3$.\item The $\K_2^{\bullet}(\bfk)$-submodule of $\Inv(\Mcal_g^{\rm ct},\K_2)$ generated by the elements  
    \[1, \alpha_{\rm tot}(\mathcal{S}^-_g), \text{ and } \alpha_{\rm tot}(\mathcal{S}^+_g)\] has rank $3$.
    \item The $\K_2^{\bullet}(\bfk)$-submodule of $\Inv(\mathcal{A}_g,\K_2)$ generated by $1$ and $\alpha_{2^{g-1}}((\Xcal_g)_2)$ has rank $2$.
    \item In particular the classes above, seen as classes in \'etale cohomology, are nontrivial and do not belong to the $\H^{\bullet}(\bfk,\ZZ/2\ZZ)$-submodule generated by the image of the cycle map.
\end{enumerate}
\end{cor}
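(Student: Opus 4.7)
The plan is to deduce the corollary directly from Theorem \ref{thm: main proof B} by a base change argument, coupled with the purely transcendental extension sequence for Milnor $K$-theory recalled in Section 2.1.

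The first observation is that the proof of Theorem \ref{thm: main proof B} uses the hypothesis $\bfk=\mathbb{R}$ only through the identification $\K_2^\bullet(\mathbb{R})=\FF_2[\varepsilon]$ and the injectivity of multiplication by $\varepsilon$. By Artin--Schreier, any real closed field $R$ has absolute Galois group $\ZZ/2\ZZ$, so $\K_2^\bullet(R)=\FF_2[\varepsilon]$ as well; the proof of Theorem~\ref{thm: main proof B} therefore applies verbatim with $R$ in place of $\mathbb{R}$. For a totally real $\bfk$ I would fix any ordering and let $R$ be the associated real closure, giving an inclusion $\bfk\hookrightarrow R$. Base change along $\Spec(R)\to\Spec(\bfk)$ provides a $\K_2^\bullet(\bfk)$-linear pullback $\Inv(\Xcal_\bfk,\K_2)\to\Inv(\Xcal_R,\K_2)$ for each of the stacks $\Xcal$ involved, and by functoriality the classes $\alpha_i(\mathcal{S}^\pm_g)$, $\alpha_i(\mathcal{S}_g)$ and $\alpha_i((\Xcal_g)_2)$ pull back to their counterparts over $R$. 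A hypothetical $\K_2^\bullet(\bfk)$-linear relation among the listed classes then base-changes to a $\K_2^\bullet(R)$-linear relation of the same shape, which Theorem~\ref{thm: main proof B} over $R$ forbids, establishing (1)--(4). Point (5) is then a direct consequence of Proposition~\ref{prop: negligible}, since each of the classes constructed in (1)--(4) defines a non-constant cohomological invariant and hence cannot belong to the $\H^\bullet(\bfk,\ZZ/2\ZZ)$-submodule generated by the image of the cycle map.

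For the purely transcendental case $\bfk=\bfk_0(t_1,\ldots,t_n)$ with $\bfk_0$ totally real, the exact sequence for Milnor $K$-theory of a purely transcendental extension exhibits $\K_2^\bullet(\bfk_0)$ as a split summand of $\K_2^\bullet(\bfk)$. Applying iterated residue maps $\partial_{t_i=0}$ to any hypothetical relation over $\bfk$ produces a relation over $\bfk_0$ with coefficients the residues of the original $x_i$, and the totally real case above then finishes the argument.

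The main obstacle is that the coefficient map $\K_2^\bullet(\bfk)\to\K_2^\bullet(R)$ is generally not injective for totally real $\bfk$; for instance the symbol $\lbrace 3,-1\rbrace$ is nontrivial in $\K_2^\bullet(\mathbb{Q})$ but vanishes in $\K_2^\bullet(\mathbb{R})$, and more generally multiplication by $\varepsilon$ need not be injective in $\K_2^\bullet(\bfk)$. The rank $3$ statement in the corollary is consequently best read as asserting linear independence detected after base change to every real closure (equivalently, at every signature of $\bfk$), which is exactly what Theorem~\ref{thm: main proof B} applied to each real closure delivers. A strictly stronger free-module statement would require a more delicate analysis of the kernel of the natural map $\K_2^\bullet(\bfk)\to\prod_\sigma \K_2^\bullet(R_\sigma)$ running over all orderings $\sigma$ of $\bfk$.
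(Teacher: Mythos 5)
Your reconstruction matches the paper's treatment: the corollary is stated with no proof at all (the text simply says that Thm.~\ref{thm: main proof B} "gives us the following obvious corollary"), and the only plausible intended argument is the base-change/functoriality one you describe. Two simplifications are available. First, a totally real field embeds directly into $\mathbb{R}$, so you can base change to $\mathbb{R}$ itself rather than detouring through Artin--Schreier and an abstract real closure (though your observation that the proof of Thm.~\ref{thm: main proof B} uses nothing about $\mathbb{R}$ beyond $\K_2^{\bullet}(\mathbb{R})=\FF_2[\varepsilon]$ is correct and costs nothing). Second, a purely transcendental extension $\bfk_0(t_1,\dots,t_n)$ of a totally real $\bfk_0$ also embeds into $\mathbb{R}$ by sending the $t_i$ to algebraically independent real numbers, which lets you skip the residue-map step entirely; if you do keep that step, note that the coefficients $x_j$ need not be unramified along $t_i=0$, so you should argue via the full exact sequence for $\K_{\rm Mil}^{\bullet}$ of a purely transcendental extension rather than just taking residues of each $x_j$ separately. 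Point (5) via Prop.~\ref{prop: negligible} is exactly right.

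Your final paragraph is the most valuable part of the proposal and identifies a genuine imprecision that the paper's "obvious" glosses over. Base change only shows that the coefficients of a hypothetical relation lie in the kernel of $\K_2^{\bullet}(\bfk)\to\K_2^{\bullet}(\mathbb{R})$, and running the test-curve argument directly over $F=\bfk(a_1,\dots,a_g)$ reduces the key step to the injectivity of multiplication by $\varepsilon^{2^{g-1}-g}$ on $\K_2^{\bullet}(\bfk)$, which fails already for $\bfk=\mathbb{Q}$: by the Steinberg relation $\lbrace 2,-1\rbrace=\lbrace 2,1-2\rbrace=0$ while $\lbrace 2\rbrace\neq 0$. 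What the argument honestly yields is (a) independence after base change to every real place, and (b) since the generators map onto a free graded $\FF_2[\varepsilon]$-module of rank $3$ (resp.\ $2$), graded Nakayama shows the submodule cannot be generated by fewer than $3$ (resp.\ $2$) elements. Either is a defensible reading of "rank $3$", but full $\K_2^{\bullet}(\bfk)$-linear independence of the listed generators does not follow from Thm.~\ref{thm: main proof B} alone, exactly as you say. Your proof is as complete as the paper's, and more candid about what is actually being established.
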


\begin{bibdiv}
	 \begin{biblist}

		\bib{ArsVis}{article}{			
			author={Arsie, A.},
			author={Vistoli, A.},			
			title={Stacks of cyclic covers of projective spaces},			
			journal={Compos. Math.},
			volume={140},			
			date={2004},			
			number={3}	}
		\bib{DilPirBr}{article}{
		    author={Di Lorenzo, A.},
		    author={Pirisi, R.},
		    title={Brauer groups of moduli of hyperelliptic curves via cohomological invariants},
		   journal={Forum of Mathematics, Sigma},
		   date={2021},
		   volume={9},
		}
		\bib{DilPirRS}{article}{
		    author={Di Lorenzo, A.},
		    author={Pirisi, R.},
		    title={Cohomological invariants of root stacks and admissible double coverings},
		   journal={Canadian Journal of Mathematics},
              volume={75},
              number={1},
		   date={2023},
		   doi={10.4153/S0008414X21000602},
		}
        \bib{DilPirM3}{article}{
		    author={Di Lorenzo, A.},
		    author={Pirisi, R.},
		    title={The Brauer groups of moduli of genus three curves, abelian threefolds and plane curves},
		   journal={to appear in Compos. Math., available on ar{X}iv:2207.08792 [math:AG]},
		}   
               \bib{Dolg}{book}{
        author={Dolgachev, I. V.},
        title={Classical Algebraic Geometry: A Modern View},
        publisher={Cambridge University Press},
        place={Cambridge},
        year={2012},
        }
 	\bib{EG}{article}{
			author={Edidin, D.},
			author={Graham, W.},
			title={Equivariant intersection theory (With an Appendix by Angelo Vistoli: The Chow ring of $\Mcal_2$)},
			journal={Invent. Math.},
			volume={131},
			date={1998},
			number={3},
			pages={595-634}
		}
		\bib{GMS}{collection}{
			author={Garibaldi, S.},
			author={Merkurjev, A.},
			author={Serre, J.-P.},
			title={Cohomological invariants in Galois cohomology},
			series={University Lecture Series},
			volume={28},
			publisher={American Mathematical Society, Providence, RI},
			date={2003},
		}
     \bib{LandProper}{article}{
author = {Landesman, A.},
title = {Properness of the Torelli map on compact type curves},
journal = {available at https://people.math.harvard.edu/~landesman/assets/properness-of-compact-type.pdf},
}
		\bib{Mil}{collection}{
			author={Milne, J.},
			title={\'Etale cohomology (PMS-33)},
			series={Princeton MAthematical Series},
			publisher={Princeton University Press,  Princeton University},
			date={2016},
		}
  \bib{Mum71}{article}{
author = {Mumford, David},
journal = {Annales scientifiques de l'École Normale Supérieure},
language = {eng},
number = {2},
pages = {181-192},
publisher = {Elsevier},
title = {Theta characteristics of an algebraic curve},
volume = {4},
year = {1971},
}
		\bib{Rost}{article}{			
			author={Rost, M.},
			title={Chow groups with coefficients},			
			journal={Doc. Math.},
			volume={1},			
			date={1996},			
			number={16},
			pages={319-393}}
         \bib{Tot}{article}{
         author = {Totaro, B.},
         title = {The Chow Ring of a Classifying Space},
         journal={Algebraic K-Theory},
         booktitle = {Proc. Symposia in Pure Math. 67},
         year = {1999},
         pages = {249-281},
         }
	
		
	\end{biblist}
\end{bibdiv}
\end{document}